\documentclass[12pt,a4paper]{article}
\usepackage[utf8]{inputenc}
\usepackage{mathtools}
\usepackage{amsmath}
\usepackage{amsfonts}
\usepackage{amssymb}
\usepackage{amsthm}
\usepackage{graphicx}
\usepackage[left=3cm,right=3cm,top=2cm,bottom=2cm]{geometry}
\usepackage{hyperref}
\usepackage{authblk}
\usepackage{orcidlink}

\usepackage{tikz}
\usetikzlibrary{decorations.pathreplacing,calligraphy}

\theoremstyle{plain}
\newtheorem{theorem}{Theorem}[section]
\newtheorem*{theorem*}{Theorem}
\newtheorem{lemma}[theorem]{Lemma}
\newtheorem{proposition}[theorem]{Proposition}
\newtheorem{corollary}[theorem]{Corollary}
\newtheorem*{corollary*}{Corollary}

\newtheorem*{conjecture*}{Conjecture}
\theoremstyle{remark}

\newtheorem*{remark}{Remark}
\theoremstyle{definition}
\newtheorem{definition}[theorem]{Definition}
\newtheorem*{definition*}{Definition}

\def\cP{\mathcal{P}}
\def\R{\mathbb{R}}
\def\N{\mathbb{N}}

\def\P{\mathbb{P}}
\def\E{\mathbb{E}}
\def\eps{\varepsilon}

\def\cC{\mathcal{C}}
\def\cG{\mathcal{G}}
\def\Expl{\mathrm{Expl}}
\def\zerovec{\mathbf{0}}
\def\bv{\mathbf{v}}
\def\L{\mathbb{L}}
\def\d{\mathrm{d}}
\def\Vorder{\vec{V}}
\def\Ghat{\hat{G}}
\def\config{\mathbf{N}}

\def\rup#1{\left\lceil #1 \right\rceil}
\def\rdown#1{\left\lfloor #1 \right\rfloor}

\renewcommand{\emptyset}{\varnothing}

\title{Exponential decay for the random connection model using asymptotic transitivity\thanks{This work was supported by the The Royal Society [grant number RF\textbackslash{}ERE\textbackslash{}231149].}}
\author{Frankie Higgs\thanks{\href{mailto:fh350@bath.ac.uk}{fh350@bath.ac.uk}, \href{https://people.bath.ac.uk/fh350/}{https://people.bath.ac.uk/fh350/}, ORCiD \orcidlink{0000-0002-7300-8412} 0000-0002-7300-8412}}

\begin{document}

\maketitle

\begin{abstract}
	We prove that the probability the cluster of the origin
	in a subcritical Poisson random connection model (RCM)
	has size at least $n$ decays exponentially as $n$ increases,
	under minimal assumptions.
	
	We extend a recent method of Vanneuville from Bernoulli percolation
	on vertex-transitive graphs
	to the RCM.
	The key idea is that the subcritical RCM can be constructed
	by site percolation on a very high-intensity RCM.
	The latter RCM
	becomes ``almost vertex-transitive'' in a certain sense
	at very high intensities,
	which is a new method that we expect to be useful for other problems.
	
	We obtain the result for connection functions
	with unbounded support,
	a setting in which it was not previously known.
\end{abstract}

\section{Introduction}

Take a random set of points $\eta$ in Euclidean space $\R^d$ for $d \in \N$,
and create a random graph by joining every pair of points $x,y \in \eta$
with an edge with probability $g(x-y)$, independently of all other pairs,
where $g$ is a function satisfying $\lim_{|x| \to \infty} g(x) = 0$.
If $\eta$ is a Poisson point process of intensity $\lambda$,
we call this graph the \emph{random connection model} (RCM).
When $g(x) = 1(\|x\| \leq 1)$,
the RCM is the well-known random geometric graph, or continuum percolation model.

The RCM is a natural model of random networks in which spatial relationships are important but long-range connections are possible, and both the location of vertices and edges between them are random.

Subject to certain conditions on $g$,
there is a percolation phase transition in $\lambda$,
i.e.\ there exists a $\lambda_c \in (0,\infty)$
such that all connected components of the RCM are almost surely finite
when $\lambda < \lambda_c$,
and there exists at least one infinite connected component
when $\lambda > \lambda_c$.

One natural question is whether this percolation phase transition is continuous,
i.e.\ whether there is an infinite component when $\lambda = \lambda_c$.
With sufficiently ``spread-out''
connection functions,
this is known both for the RCM~\cite{rcm-lace-expansion}
and for discrete models of long-range percolation~\cite{power-law-critical}.
Such a result is conjectured for many short-range percolation models including the random geometric graph,
but has only been proved in certain cases, in particular when the dimension is either 2 or sufficiently high.
By changing the connection function $g$
the RCM is a family which naturally contains both short-range and long-range
percolation models.

We will prove a \emph{sharpness} result for the RCM with \emph{any} connection function: in the whole subcritical phase the clusters are not just finite but very small.
For $\lambda < \lambda_c$
we show that the probability a typical vertex of the RCM
is in a connected component of size at least $n \in \N$
decays exponentially in $n$.
This was previously only known when $g$ had bounded support
(see \cite[Lemma 5.2]{sharpness-bounded-support}).
If $g$ has unbounded support then it is possible that
the cluster containing a given vertex may have a large diameter but very few points,
or many points in a small area.
Theorem~\ref{thm:exponential-decay}
shows that the latter does not occur:
although the RCM can have arbitrarily long edges,
in the subcritical regime the size of clusters still has an exponential tail.
To the author's knowledge there is no proof in the literature
of such as result for \emph{any} long-range percolation model.

We use the recent stochastic comparison technique of Vanneuville \cite{vanneuville},
who proved the same exponential decay result for Bernoulli bond percolation
on infinite, vertex-transitive graphs such as the integer lattice.
We construct the random connection model
by Bernoulli site percolation on an RCM of a much higher intensity.
A similar idea was used for the random geometric graph in \cite{noise-sensitivity}.
The difficulty in adapting Vanneuville's method
is that \cite{vanneuville} relies
on the graph being vertex transitive,
while the higher-intensity RCM
is not transitive.
For example it can have cliques of arbitrary size.
Our new insight is that as the intensity tends to infinity the denser RCM becomes
``asymptotically vertex-transitive'',
in that the probability of events related to
the behaviour of the percolation configuration around a vertex $x$
become close to uniform in $x$.

As an example of ``asymptotic vertex-transitivity'',
suppose we are given an RCM $G_\lambda$ of intensity $\lambda$,
and we are interested in events related to the structure of the graph
around a given vertex $x$,
such as whether $x$ is isolated,
or whether the connected component
containing $x$ is infinite. 
If we were to perform site percolation with parameter $p$ on $G_\lambda$,
then we get a \emph{random} conditional probability measure $P_p^{G_\lambda}$
which is the Bernoulli product measure on the vertices of $G_\lambda$.
Then the random function $f_{p}^{G_\lambda} : V(G_\lambda) \to [0,1]$ given by
$x \mapsto P_p^{G_\lambda}(x \text{ has an open neighbour})$
varies significantly across $V(G_\lambda)$.
For example, some vertices will be isolated and have $f_p^{G_\lambda}(x) = 0$,
and some vertices will have an unusually large number of neighbours in $G_{\lambda}$
so will have $f_p^{G_\lambda}(x)$ close to 1.
But if we take a large $N \in \N$ and change the parameters
$\lambda \mapsto N\lambda$, $p \mapsto p/N$,
then the marginal distribution of the percolation configuration stays the same,
but there is a constant $c_{p\lambda} \in (0,1)$ such that for $K > 0$,
$\sup_{x \in V(G_{N\lambda}) \cap [-K,K]^d} |f_{p/N}^{G_{N\lambda}}(x) - c_{p\lambda}| \to 0$
in probability as $N \to \infty$.

We expect that this method will be useful
to transfer other approaches and results
from discrete percolation to continuum models.

\subsection{Definitions}

\begin{definition}
	\label{def:rcm}
	Let $g : \R^d \to [0,1]$ and assume that 
	$g(x)$ depends only on $\|x\|$
	and that if $\|x\| \leq \|y\|$ then $g(x) \geq g(y)$.
	Let $\eta \subseteq \R^d$ be a discrete set of points.
	For every pair $x,y \in \eta$ with $x \not= y$,
	let $U_{x,y}$ be a uniform random variable in $[0,1]$,
	independent of all other pairs.
	Construct the random graph $G_g(\eta)$
	with vertex set $\eta$ and edge set $E_g(\eta)$
	given by $xy \in E_g(\eta)$ if and only if $U_{x,y} \leq g(x-y)$.
	We call $g$ the \emph{connection function}
	of the random graph $G_g(\eta)$.
	
	If $\eta$ is a Poisson point process,
	we call $G_g(\eta)$ the \emph{random connection model} (RCM).
	If $\eta$ has intensity $\lambda > 0$,
	let $\P_\lambda$ denote the law of $\eta$,
	and $\P_{\lambda,g}$ the joint law of $\eta$ and the uniform random variables $U_{x,y}$.\\
	
	We will often add one or more points $x_1, \dots, x_k$
	to the vertex set of the RCM.
	In this case we introduce new uniform random variables $U_{x_i,y}$
	for $y \in \eta \cup \{x_1, \dots, x_k\} \setminus \{x_i\}$
	independent of those defined between vertices of $\eta$,
	and then determine the edges of $G_g(\eta \cup \{x_1, \dots, x_k\})$
	as above.
	If we add only one vertex $x \in \R^d$ it will be convenient to write
	$\eta^{x} := \eta \cup \{x\}$.
	In a slight abuse of notation we will continue to use $\P_{\lambda,g}$
	to denote the joint law of $\eta$ and the extended collection of uniforms.
\end{definition}

\begin{remark}
	We could replace the assumptions we make on $g$ with others,
	such as that $g(-x) = g(x)$ for all $x$ and $g$ is continuous.
	However, the assumptions in Definition~\ref{def:rcm} simplify our arguments.
	To use a more general class of connection functions,
	the only proof which would require significant modification
	would be that of Lemma~\ref{thm:G-coupling}:
	if we replace $g(x-y)$
	with the supremum of $g(x' - y')$
	for $x'$ and $y'$ close to $x$ and $y$,
	the probability of adding new edges is small.
\end{remark}

\begin{definition}
	\label{def:size-prob}
	Given a point process $\eta$,
	let $\cC_0$ denote the component of $G_g(\eta^{\zerovec})$
	containing $\zerovec$.
	For $n \geq 0$ and $\lambda > 0$ let
	\[
	\psi_n(\lambda) := \P_{\lambda,g}[ |\cC_0| \geq n ].
	\]
\end{definition}

\begin{remark}
	If $\int_{\R^d}g(x)\d x = \infty$ then, almost surely,
	the neighbourhood of $\zerovec$ in $G_g(\eta^{\zerovec})$
	is both infinite and unbounded,
	so $\psi_n(\lambda) = 1$ for all $n \in \N$ and $\lambda > 0$.
	To make the RCM a non-trivial percolation model we assume that
	\begin{align}
		\label{eq:g-finite}
		0 < \int_{\R^d} g(x)\d x < \infty.
	\end{align}
	Then \cite[Theorem 6.1]{meester-roy} states that,
	if $d \geq 2$ and $g$ satisfies \eqref{eq:g-finite},
	then there exists a $\lambda_c \in (0,\infty)$
	such that $\cC_0$ is finite almost surely if $\lambda < \lambda_c$,
	and $\P_{\lambda,g}[|\cC_0| = \infty] > 0$ if $\lambda > \lambda_c$.
\end{remark}

\begin{remark}
	In the one-dimensional RCM, \eqref{eq:g-finite} is necessary but not sufficient
	to have a non-trivial phase transition.
	It is shown in \cite[Theorem 3]{1d-marked}
	that in one dimension,
	if $g$ follows a power-law distribution
	with $g(x) \sim \|x\|^{-\alpha}$ with $\alpha > 2$,
	then there is no infinite component for any $\lambda < \infty$.
	On the other hand if $\alpha \in (1,2)$
	then there is a finite $\lambda_c > 0$
	such that the RCM has an infinite component for $\lambda > \lambda_c$
	\cite{random-walk-on-rcm}
	(see their remark at the bottom of page 1051).
	
	All of our results are true when $d=1$
	for any $g$ satisfying \eqref{eq:g-finite},
	even if the condition $\lambda < \lambda_c$ is trivial.
\end{remark}

\begin{remark}
	The \emph{Poisson Boolean model} is related to the RCM.
	Instead of connecting points with edges using $g$,
	in the Poisson Boolean model we place a ball of random radius
	centred at each point of $\eta$,
	with the radius sampled independently for each point.
	We expect some of the methods in this paper
	would also be useful for the Poisson Boolean model,
	but leave this for future work.
\end{remark}

\subsection{Main results}

These results extend those of \cite{vanneuville}
from nearest-neighbour Bernoulli bond percolation on a transitive graph
to the random connection model.

\begin{theorem}
	\label{thm:exponential-decay}
	Let $g$ be a connection function satisfying \eqref{eq:g-finite}.
	For all $\lambda < \lambda_c(g)$ there exist $c, C > 0$
	depending on $\lambda$ and $g$
	such that for all $n \in \N$,
	$$
		\psi_n(\lambda) \leq C e^{-cn}.
	$$
\end{theorem}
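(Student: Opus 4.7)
My plan is to import Vanneuville's stochastic comparison argument from \cite{vanneuville} into the RCM setting by realising the subcritical RCM at intensity $\lambda$ as Bernoulli site percolation on an RCM at a much higher intensity, and substituting the (exact) vertex-transitivity hypothesis of \cite{vanneuville} with the asymptotic transitivity described in the introduction. More concretely, fix $\lambda < \lambda_c(g)$ and $N \in \N$, and let $\eta_N$ be a PPP of intensity $N\lambda$. Retain each point of $\eta_N$ independently with probability $p_N = 1/N$, and keep $\zerovec$ deterministically. Poisson thinning makes the retained set a PPP of intensity $\lambda$, and the induced subgraph of $G_g(\eta_N^{\zerovec})$ on the retained vertices (with the original uniforms $U_{x,y}$) has the law of $G_g(\eta^{\zerovec})$ for $\eta$ of intensity $\lambda$. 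Writing $P_{p_N}^H$ for the quenched law of Bernoulli site percolation on the graph $H$ at parameter $p_N$, we obtain
\[
\psi_n(\lambda) = \E_{N\lambda, g}\!\left[ P_{p_N}^{G_g(\eta_N^{\zerovec})}\bigl( |\cC_0^{\mathrm{site}}| \geq n \bigr) \right],
\]
where $\cC_0^{\mathrm{site}}$ denotes the cluster of $\zerovec$ in the site-percolation configuration.

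I would then adapt Vanneuville's stochastic comparison to this site percolation on $G_g(\eta_N^{\zerovec})$. Vanneuville's proof bounds the decay rate of the cluster-size tail in terms of a sum over potential pivotal vertices of quenched connection probabilities, which vertex-transitivity replaces by a single constant. In our setting these probabilities depend on both the random graph and the vertex position, so a separate asymptotic transitivity lemma is needed: for the relevant local events $A_x$, the quenched probabilities $P_{p_N}^{G_g(\eta_N)}(A_x)$ concentrate, uniformly in $x$ within any fixed window, around a deterministic value $c(\lambda, g)$ as $N \to \infty$. Intuitively, after thinning at rate $1/N$ each vertex of $G_g(\eta_N)$ sees essentially the same (annealed) local neighbourhood structure, which is the content of the informal example given after the statement of the main theorem.

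The main obstacle will be controlling the error from atypical vertices: those with unusually high degree in $G_g(\eta_N)$, or in regions where the Poisson counts deviate from their mean. I would truncate the degree using Poisson concentration, bound the complementary contribution by a crude monotone estimate, and then choose $N$ large enough that the residual asymptotic-transitivity error is negligible compared to the desired exponential rate coming from Vanneuville's argument. Since $\psi_n(\lambda)$ does not depend on $N$, passing $N \to \infty$ after the argument yields $\psi_n(\lambda) \leq C e^{-cn}$ with $c, C > 0$ depending only on $\lambda$ and $g$, as required.
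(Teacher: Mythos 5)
Your overall strategy --- constructing the intensity-$\lambda$ RCM by Bernoulli site percolation at parameter $1/N$ on an RCM of intensity $N\lambda$, and replacing Vanneuville's vertex-transitivity hypothesis by a concentration statement for quenched probabilities of local events, uniform in the vertex --- is exactly the route the paper takes in the case where $g$ has bounded support. To be precise about what must be controlled: the engine of Vanneuville's argument is a ghost field $\cG$ of intensity $h$, and the quantity that asymptotic transitivity must bound uniformly is the quenched probability that a vertex is joined to a green vertex by a path of open vertices off the explored set; showing this is at most $m_h(\lambda)+\delta$ with high probability as $N\to\infty$ (Proposition~\ref{thm:transitivity}) feeds into the exploration/pivotal lemma and yields the functional inequality $\psi_n(\lambda(1-m_h(\lambda))) \leq (1-m_h(\lambda))^{-1}\psi_n(\lambda)e^{-hn}$, from which exponential decay on the whole subcritical interval follows as in Vanneuville. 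Your sketch is compatible with all of this, though your description of the comparison in terms of ``a sum over potential pivotal vertices'' elides the ghost field, which is where the constant to be made vertex-independent actually comes from.

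The genuine gap is that your plan never confronts unbounded support, which is the main point of the theorem. The exploration and stochastic-domination machinery is carried out on a \emph{finite} graph, namely the RCM restricted to a box $\Lambda_{nR}$, and the argument needs the event $\{|\cC_0|\geq n\}$ to be determined by that box; this holds only when $g$ is supported in $\Lambda_R$, so that a cluster of at most $n$ points cannot reach outside $\Lambda_{nR}$. When $g$ has unbounded support there is no such box, and taking $N\to\infty$ does not resolve this: the obstruction is in the spatial range of $g$, not in the density of the ambient point process. The paper handles it by truncating $g$ at radius $R$, applying the bounded-support result to $g^R$, and then proving (Proposition~\ref{thm:cutoff-k}, via the multivariate Mecke equation and an integrability bound on the connectedness function $g_2$) that $\P_{\lambda,g}[\,|\cC_0^R|=k \text{ and } \cC_0\neq\cC_0^R\,]\to 0$ as $R\to\infty$, hence $\psi_n^R(\lambda)\to\psi_n(\lambda)$; combined with $m_h^R(\lambda)\leq m_h(\lambda)$ and monotonicity of $\psi_n^R$ in the intensity, this transfers the functional inequality to general $g$. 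You would need to supply this limiting argument, or some substitute for the finite-volume restriction, for your proof to cover the theorem as stated.
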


\begin{remark}
	When the support of $g$ is bounded,
	this bound was recently shown for the RCM in \cite[Lemma 5.2]{sharpness-bounded-support}.
	As far as we can tell our Theorem~\ref{thm:exponential-decay}
	is the first time such an exponential decay result
	has been shown for the RCM
	with a long-range connection function.
\end{remark}

To prove Theorem~\ref{thm:exponential-decay},
following Vanneuville~\cite{vanneuville},
we introduce a ``ghost field''
as an extra source of randomness.
The idea is that when the ghost field has a low intensity,
it only intersects very large components.

\begin{definition}
	Let $h > 0$.
	Given a graph $G = (V,E)$,
	let $\cG \subseteq V$ be a random subset of the vertices,
	where for each $v \in V$,
	we have $v \in \cG$ with probability $1 - e^{-h}$
	independently of the other vertices.
	Call $v \in V$ \emph{green} if $v \in \cG$.
	
	For the graph $G = G_g(\eta^{\zerovec})$,
	let $\P_{\lambda,g,h}$ denote the joint law of
	$\eta$, $(U_{x,y})_{x,y \in \eta^\zerovec}$
	and $\cG$. Then let
	$$
		m_h(\lambda) := \P_{\lambda,g,h}[ \cC_0 \cap \cG \not= \emptyset ].
	$$
\end{definition}

Theorem~\ref{thm:exponential-decay}
can be derived immediately from the following theorem.
\begin{theorem}
	\label{thm:decrease-lambda}
	Let $g$ be as above.
	Let $\lambda \in (0,\infty)$ and $h \in (0,\infty)$,
	and let $\lambda' = \lambda(1 - m_h(\lambda))$.
	Then for all $n \in \N$,
	\[
		\psi_n(\lambda') \leq \frac{1}{1 - m_h(\lambda)} \psi_n(\lambda) e^{-hn}.
	\]
\end{theorem}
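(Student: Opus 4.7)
The plan is to follow Vanneuville's stochastic-comparison approach, in which the cluster-size distributions at two nearby parameter values are compared via a coupling with the ghost field. Vanneuville's argument is formulated on a vertex-transitive graph; the main work is to adapt it to the RCM, which is not transitive. I would do so by realising the $\lambda$- and $\lambda'$-RCMs as Bernoulli site percolations on an auxiliary high-intensity $\mu$-RCM (by Poisson thinning, retaining each vertex of a $\mu$-PPP independently with probability $\lambda/\mu$ gives a $\lambda$-PPP, and analogously for $\lambda'$), and using the paper's ``asymptotic transitivity'' to handle the resulting non-symmetry.

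Conditionally on a realisation of the $\mu$-RCM, the theorem reduces to a comparison of the $p$- and $p'$-site-percolation cluster sizes on a random but fixed graph, where $p := \lambda/\mu$ and $p' := \lambda'/\mu = p(1 - m_h(\lambda))$, with an independent ghost field of rate $h$ on its vertices. On this fixed graph I would try to reproduce Vanneuville's coupling between the two Bernoulli configurations: informally, one constructs the $p'$-configuration by additionally closing each open vertex whose open $p$-cluster reaches the ghost. On a transitive graph, symmetry forces the marginal probability of each such closure to equal $m_h(p)$, and a direct comparison of cluster counts (using $(1-m_h)^k \leq e^{-hk}$ so that large clusters survive ``correlated'' closure more often than independent thinning) yields the inequality with the factor $(1 - m_h(\lambda))^{-1} e^{-hn}$.

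The principal obstacle is that the $\mu$-RCM is non-transitive, so the closure probability at each vertex is a genuine random variable depending on the local geometry. The new ingredient, announced in the introduction as ``asymptotic transitivity'', is that as $\mu \to \infty$ with $p\mu = \lambda$ held fixed, these closure probabilities concentrate, uniformly over any compact region, around the deterministic value $m_h(\lambda)$. I would therefore carry out Vanneuville's coupling on the $\mu$-RCM restricted to a large ball, bound the error from non-transitivity by the uniform oscillation of the closure probabilities over that ball, and let this error vanish as $\mu \to \infty$. Since the marginal laws of the site-percolation configurations coincide with the $\lambda$- and $\lambda'$-RCMs for every $\mu$, passing to the limit produces the claimed inequality. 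Controlling the cluster of the origin uniformly in $\mu$ so that the compact-region restriction is not lossy---the most delicate point---should follow from $\int g\,\d x < \infty$ together with monotone convergence for $\psi_n$.
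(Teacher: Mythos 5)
Your plan is essentially the paper's proof: realise the $\lambda$-RCM as Bernoulli site percolation with parameter $1/N$ on an $N\lambda$-RCM, run Vanneuville's exploration/pivotal argument to get the stochastic domination $\P_{\lambda'} \preceq \P_{\lambda,h}[\,\cdot \mid \cC_0 \cap \cG = \emptyset]$, control the pivotal (``closure'') probabilities uniformly over vertices via asymptotic transitivity as $N \to \infty$, and conclude with $\P[\cC_0 \cap \cG = \emptyset \mid |\cC_0| \geq n] \leq e^{-hn}$ and $\P[\cC_0 \cap \cG = \emptyset] \geq 1 - m_h(\lambda)$. For a connection function of bounded support this is exactly what the paper does (the exploration is carried out in the box $\Lambda_{nR}$, on which $\{|\cC_0| \geq n\}$ is measurable), and your identification of the non-transitivity issue and its resolution is accurate.

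The one genuine gap is your treatment of unbounded support. You propose to run the coupling on the high-intensity RCM ``restricted to a large ball'' and argue the restriction is not lossy by monotone convergence. But when $g$ has unbounded support, neither $\{|\cC_0| \geq n\}$ nor the conditioning event $\{\cC_0 \cap \cG = \emptyset\}$ is determined by the configuration in any compact region, so the finite-graph exploration argument does not apply to the untruncated model on a ball, and you would be left juggling three limits ($s \to 0$ in the discretisation, $N \to \infty$, and the ball radius $\to \infty$) whose interchange is not justified by what you have written. The paper instead truncates the \emph{connection function}: it proves the theorem for $g^R(x) = g(x)1(\|x\| \leq R)$, for which the events are local to $\Lambda_{nR}$, and then sends $R \to \infty$ in the final inequality, using that $m_h^R(\lambda) \leq m_h(\lambda)$ (so $\lambda'_R \geq \lambda'$ and monotonicity of $\psi_n^R$ in $\lambda$ applies) together with $\psi_n^R(\lambda) \to \psi_n(\lambda)$ (the paper proves this via an integrability estimate on the connectedness function $g_2$, though the coupling $G_{g^R} \subseteq G_g$ and monotone convergence also suffice). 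Your instinct that $\int g\,\d x < \infty$ plus monotone convergence is the right tool is correct, but it must be applied to a truncation of $g$, not to a spatial restriction of the model.
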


In the next section, we prove Theorem~\ref{thm:decrease-lambda}
in the case where $g$ has bounded support.
In Section~\ref{sec:unbounded}
we derive the result for general $g$
from the bounded case.


\section{Proof for a bounded connection function}
\label{sec:bounded}

Section~\ref{sec:comparison}
closely follows the \emph{stochastic comparison} method of \cite{vanneuville},
which applies to Bernoulli percolation on a vertex-transitive graph,
to the random connection model.
The idea behind this method for Bernoulli percolation
is that conditioning on certain decreasing events
has less of an effect than decreasing the percolation parameter.
To apply it to our continuum model,
we use the observation that the RCM can be constructed by site percolation
on an RCM with the same connection function
but where the point process has much higher intensity.

Section~\ref{sec:transitivity} is where we depart from \cite{vanneuville}
and develop new methods necessary
to complete the proofs of Section~\ref{sec:comparison}
for the random connection model.
In particular we use the ``asymptotic vertex-transitivity'' method:
first we take an RCM with intensity $N\lambda$,
and construct the intensity-$\lambda$ RCM by site percolation
with parameter $1/N$.
We show that with high probability,
the higher-intensity graph is such that the conditional probability
a vertex $x$ is connected to the ghost field
by open vertices is bounded by $m_h(\lambda) + \eps$
uniformly in $x$
for all sufficiently large $N$.

\subsection{Stochastic comparison}
\label{sec:comparison}

Since we add the origin as a vertex in our random connection model,
it will be useful to define a site percolation cluster for rooted graphs
in such a way that we do not close the root vertex.

\begin{definition}
	\label{def:percolation}
	Let $G = (V,E)$ be a finite graph
	with root $\rho \in V$.
	For $p \in (0,1)$ a \emph{percolation configuration}
	on $G$ is a random variable $\omega \in \{0,1\}^{V \setminus \{\rho\}}$
	such that $\omega(x) = 1$ with probability $p$
	for each $x \in V \setminus \{\rho\}$,
	independently of all other vertices.
	Let $P^G_p$ denote the law of $\omega$.
	If $\cG$ is a field of independent $\mathrm{Bernoulli}(1 - e^{-h})$ random variables
	indexed by $V$,
	let the joint law of $\omega$ and $\cG$ be $P^G_{p,h}$.

	We will frequently identify $\omega$
	with the set $\{ x \in V : \omega(x) = 1 \}$.
	We then talk about the induced subgraph of $G$
	with vertex set $\omega \cup \{ \rho \}$.
\end{definition}

\begin{definition}
	\label{def:pivotal}

	For $\omega \in \{0,1\}^{V\setminus\{\rho\}}$, $x \in V \setminus \{\rho\}$
	and $i \in \{0,1\}$,
	define $\omega_x^i \in \{0,1\}^{V\setminus\{\rho\}}$
	by $\omega_x^i(x) = i$
	and $\omega_x^i(y) = \omega(y)$ for $y \not= x$.

	Let $A \subseteq \{0,1\}^{V \setminus \{\rho\}} \times \{0,1\}^{V}$
	be an event concerning $\omega$ and $\mathcal{G}$.
	Given $\omega$ and $\mathcal{G}$, we say that $x \in V \setminus \{\rho\}$
	is \emph{pivotal} for $A$
	if $(\omega^0_x, \mathcal{G}) \not\in A$
	and $(\omega^1_x, \mathcal{G}) \in A$.
\end{definition}

\begin{definition}
	\label{def:omega}
	Let $\lambda > 0$ and $N \in \N$.
	Given a homogeneous Poisson point process $\eta$
	with intensity $N \lambda$,
	let $\omega_N \in \{0,1\}^{\eta}$
	be a percolation configuration on $G_g(\eta^\zerovec)$
	with parameter $1/N$
	where $\zerovec$ is the root.
	Let $P_N$ denote the conditional law of $\omega_N$
	given $\eta$,
	and let $P_{N,h}$ denote the joint conditional law
	of $\omega_N$ and $\mathcal{G}$ given $\eta$.
\end{definition}

\begin{remark}
	Note that for all $\lambda > 0$ and $N \in \N$,
	for any measurable $f$,
	\[
		\E_{\lambda,g,h}(f) = \E_{N\lambda,g}( E_{N,h}(f) ),
	\]
	where $E_{N,h}(f) := \int f\, \d P_{N,h}$.
\end{remark}

We adapt the notion of exploration from \cite{vanneuville}
to the case of site percolation:
\begin{definition}
	\label{def:exploration}
	Let $G = (V,E)$ be a finite graph
	with root $\rho \in V$
	and let $\Vorder$ be the set of orderings
	$\vec{v} = (\rho, v_1, \dots, v_{|V|-1})$
	of $V$
	which begin with the root.
	An \emph{exploration} of $V$ is a map
	\[
		\bv : \{0,1\}^{V \setminus \{\rho\}} \to \Vorder,
	\]
	\[
		\omega \mapsto (\rho, \bv_1, \dots, \bv_{|V|-1}),
	\]
	such that $\bv_1$ does not depend
	on $\omega$ and $\bv_{k+1}$ depends only on $(\bv_1, \dots, \bv_k)$
	and $(\omega_{\bv_1}, \dots, \omega_{\bv_k})$.

	For a given exploration $\bv$,
	given $(\vec{v},\sigma) \in \Vorder \times \{0,1\}^{V\setminus \{\rho\}}$
	and $k \in \{0,\dots,|V|\}$,
	let
	$$\Expl_k(\vec{v},\sigma)
	:= \{ \omega \in \{0,1\}^{V\setminus \{\rho\}} :
		\forall j \in \{1,\dots,k\}, \bv_j = v_j \text{ and } \omega_{v_j} = \sigma_j
	\}.$$
	If $\omega$ is a random element of $\{0,1\}^{V\setminus\{\rho\}}$,
	then $\Expl_k(\vec{v},\sigma)$
	is the event that the first $k$ steps of $(\bv,\omega)$ coincide with $(\vec{v},\sigma)$.
	Let $\Expl(\vec{v},\sigma) = \Expl_{|V|-1}(\vec{v},\sigma).$
\end{definition}

\begin{definition}
	Given a set $S$,
	an event $A \subseteq \{0,1\}^S$
	is said to be \emph{increasing}
	if, whenever $\omega \in A$, if $\omega' \in \{0,1\}^S$
	with $\omega'(x) \geq \omega(x)$ for all $x \in S$,
	then $\omega' \in A$.
	
	An event $A$ related to the random connection model $G_g(\eta^\zerovec)$
	is said to be increasing
	if, when $G_g(\eta^\zerovec) \in A$
	and $H$ is a graph containing $G_g(\eta^\zerovec)$
	as a subgraph,
	then $H \in A$,
	i.e.\ when $A$ occurs, adding vertices or edges to the graph
	does not stop $A$ from occuring.
	
	Given two measures $\mu$, $\nu$ either on a product space $\{0,1\}^S$
	or the space corresponding to the RCM,
	we say that $\nu$ \emph{stochastically dominates} $\mu$,
	$\mu \preceq \nu$,
	if $\mu(A) \leq \nu(A)$ for all increasing events $A$.
\end{definition}

The following lemma is adapted from \cite{vanneuville}.

\begin{lemma}
	\label{thm:pivotal}
	Let $G = (V,E)$ be a finite graph with root $\rho \in V$.
	Let $p \in (0,1)$ and $h > 0$.
	Let $A \subseteq \{0,1\}^{V\setminus\{\rho\}} \times \{0,1\}^V$
	be any non-empty set
	and let $\eps \in [0,1]$.
	Assume that for every
	$(\vec{v},\sigma) \in \Vorder \times \{0,1\}^{V\setminus\{\rho\}}$
	such that $A \cap \Expl(\vec{v},\sigma) \not= \emptyset$ we have
	$$
	\forall k \in \{0, \dots, |V|-2\}, \quad
	P_{p,h}^G[v_{k+1} \text{ is pivotal for } A \,|\, A \cap \Expl_k(\vec{v},\sigma)] \leq \eps.
	$$
	Then
	$$
	P_{p(1-\eps)}^G \preceq P_{p,h}^G[\omega \in \cdot \,|\, A].
	$$
\end{lemma}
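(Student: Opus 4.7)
The plan is to build a coupling $(\omega, \tilde{\omega})$ in which $\omega$ follows $P_{p,h}^G[\cdot \mid A]$, $\tilde{\omega}$ follows $P_{p(1-\eps)}^G$, and $\tilde{\omega}(x) \leq \omega(x)$ for every $x \in V \setminus \{\rho\}$; this coupling at once yields the stochastic domination, because then for every increasing event $B$ we have $\{\tilde{\omega} \in B\} \subseteq \{\omega \in B\}$.

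I would construct the coupling by revealing $\omega$ along the exploration $\bv$. Let $(U_k)_{k \geq 1}$ be independent $\mathrm{Uniform}[0,1]$ variables, independent of everything else. Given the history $(\vec{v},\sigma)$ realised through step $k$, define
\[
q_{k+1} := P_{p,h}^G\bigl[\omega(\bv_{k+1}) = 1 \,\bigm|\, A \cap \Expl_k(\vec{v},\sigma)\bigr],
\]
then set $\omega(\bv_{k+1}) := \mathbf{1}\{U_{k+1} \leq q_{k+1}\}$ and $\tilde{\omega}(\bv_{k+1}) := \mathbf{1}\{U_{k+1} \leq p(1-\eps)\}$. Since the exploration visits each vertex of $V \setminus \{\rho\}$ exactly once and the $U_k$ are i.i.d.\ and independent of the conditioned sample, $\tilde{\omega}$ has the product Bernoulli$(p(1-\eps))$ law on $V \setminus \{\rho\}$. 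The pointwise domination $\tilde{\omega}(\bv_{k+1}) \leq \omega(\bv_{k+1})$ then holds step-by-step as soon as $q_{k+1} \geq p(1-\eps)$ for every admissible history.

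The heart of the proof is therefore the single-step inequality $q_{k+1} \geq p(1-\eps)$. I would decompose each environment $(\omega_{-\bv_{k+1}}, \cG)$ compatible with $\Expl_k(\vec{v},\sigma)$ according to which of $(\omega^0_{\bv_{k+1}},\cG)$ and $(\omega^1_{\bv_{k+1}},\cG)$ lies in $A$, producing four cases with conditional probabilities $a$, $b$, $c$, $d$ (both in $A$; neither in $A$; only the $0$-flip in $A$; only the $1$-flip in $A$). Bayes' formula then gives
\[
q_{k+1} = \frac{p(a+d)}{a+pd+(1-p)c},
\]
and the pivotal event of Definition~\ref{def:pivotal} is precisely the case of probability $d$, so the hypothesis reads $pd/(a+pd+(1-p)c) \leq \eps$. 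A direct algebraic manipulation of these two relations gives the desired bound $q_{k+1} \geq p(1-\eps)$.

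The main obstacle is the asymmetry of Definition~\ref{def:pivotal}: the hypothesis controls only the ``turn-on'' environments contributing $d$, while the ``turn-off'' environments contributing $c$ also appear in the denominator of $q_{k+1}$. In the intended application $A$ will be the increasing event $\{\cC_0 \cap \cG \neq \emptyset\}$, in which case opening a vertex cannot turn $A$ off, so $c = 0$ and the algebra collapses immediately to $q_{k+1} = p(a+d)/(a+pd) \geq p \geq p(1-\eps)$; the nontrivial role of the pivotal hypothesis is to control the extra slack that will eventually be paid for in the iterative application of this lemma.
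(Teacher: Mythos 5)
Your overall route --- a monotone coupling built step by step along the exploration, reduced to the single-step inequality $q_{k+1} \geq p(1-\eps)$ --- is exactly the argument of Vanneuville that the paper invokes, and your four-case decomposition with $q_{k+1} = p(a+d)/(a+pd+(1-p)c)$ is correct. The gap is the sentence ``a direct algebraic manipulation of these two relations gives the desired bound'': no such manipulation exists. Writing $D = a+pd+(1-p)c$, the target $q_{k+1}\geq p(1-\eps)$ is equivalent to $(1-p)(c-d)\leq \eps D$, whereas the hypothesis, read with Definition~\ref{def:pivotal} exactly as printed (pivotal $=$ the $d$-type environments), is $pd \leq \eps D$. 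The first does not follow from the second: take $a=d=0$ and $c>0$ (e.g.\ $V=\{\rho,x\}$ and $A=\{\omega(x)=0\}$), so the hypothesis holds with $\eps=0$ yet $q_{k+1}=0<p$. What makes the lemma true is that the pivotality hypothesis must control $c$, not $d$: Definition~\ref{def:pivotal} has the roles of $\omega^0_x$ and $\omega^1_x$ reversed relative to how it is used in Lemma~\ref{thm:disjoint-condition}. With pivotal meaning $(\omega^0_x,\cG)\in A$ and $(\omega^1_x,\cG)\notin A$ (or the symmetric definition), the hypothesis reads $(1-p)c \leq \eps D$, and then $(1-p)(c-d)\leq(1-p)c\leq \eps D$ closes the step. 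You needed to either carry out this implication or observe that the printed definition makes the statement false; asserting an unspecified manipulation is precisely where the proof breaks.

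Your fallback paragraph compounds the error. The conditioning event in the application is $A=\{\cC_0\cap\cG=\emptyset\}$, which is \emph{decreasing} in $\omega$, not the increasing event $\{\cC_0\cap\cG\neq\emptyset\}$; consequently $d=0$ and $c$ is exactly the term that must be controlled --- the opposite of what you claim. Moreover, if $c$ were $0$ one would get $q_{k+1}\geq p$ and could take $\eps=0$, making the lemma contentless: the factor $1-\eps = 1-m_h(\lambda)-\delta$ is the whole point of Theorem~\ref{thm:decrease-lambda}, and there is no ``iterative application'' of this lemma that pays for slack. A smaller imprecision: the uniforms $U_k$ are not ``independent of the conditioned sample,'' since you construct $\omega$ from them and the exploration order depends on $\omega$; that $\tilde\omega$ is product Bernoulli$(p(1-\eps))$ should instead be justified sequentially (given the history through step $k$, $\bv_{k+1}$ is determined and $U_{k+1}$ is uniform and independent of it).
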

\begin{proof}
	This involves little more than
	verifying that the arguments in \cite{vanneuville}
	also work for site percolation.
\end{proof}

We will now apply this lemma to a particular graph:
the RCM inside a large box.

\begin{definition}
	\label{def:box}
	For $r > 0$ and $x \in \mathbb{R}^d$ let
	\[\Lambda_r(x) := x + [-r/2,r/2)^d,\]
	and $\Lambda_r := \Lambda_r(\zerovec)$.

	For $N \geq 1$ and $R > 0$,
	let $\eta_N$ be a Poisson point process of intensity $N\lambda$ in $\Lambda_{nR}$,
	and let $G = G_g(\eta_N^\zerovec)$ with root $\zerovec$.
	Let $\omega_N$ be a site percolation configuration on $G$
	with parameter $1/N$.
	Then $G_g(\omega_N \cup \{\zerovec\})$ has marginal law $\P^n_{\lambda,g}$.
\end{definition}

\begin{lemma}
	\label{thm:disjoint-condition}
	Let $\lambda > 0$ and $h > 0$.
	Suppose $g$ is a connection function with support contained in $\Lambda_{R}$
	for some $R > 0$.
	Let $\P_{\lambda,g,h}^n$ denote the joint law of
	$G^{nR}_g = G_{g}^{nR}(\eta^\zerovec)$
	and the restriction of $\mathcal{G}$ to $\eta \cap \Lambda_{nR}$.
	Let $\lambda' = \lambda(1 - m_h(\lambda))$.
	Then for all $n \geq 1$,
	$$
		\P^n_{\lambda',g} \preceq
		\P^n_{\lambda,g,h}[G^{nR}_g \in \cdot
			\,|\, (\cC_0 \cap \Lambda_{nR}) \cap \cG = \emptyset].
	$$
\end{lemma}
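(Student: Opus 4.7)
The plan is to apply Lemma~\ref{thm:pivotal} to site percolation on an RCM of much higher intensity, and then pass to a high-intensity limit. Concretely, for each $N \in \N$ let $\eta_N$ be a Poisson point process on $\Lambda_{nR}$ of intensity $N\lambda$, form $G := G_g(\eta_N^\zerovec)$, do Bernoulli$(1/N)$ site percolation $\omega_N$ on $G$ (with root $\zerovec$), and sample an independent ghost field $\cG$. By Definition~\ref{def:box}, $G_g(\omega_N \cup \{\zerovec\})$ has marginal law $\P^n_{\lambda,g}$, so conditioning this coupling on $A := \{\cC_0 \cap \cG = \emptyset\}$ produces a measure whose marginal is exactly $\P^n_{\lambda,g,h}[G^{nR}_g \in \cdot \mid \cC_0 \cap \cG = \emptyset]$.

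Next I would fix a deterministic BFS-style exploration $\bv$ of $\eta_N^\zerovec$ rooted at $\zerovec$: at step $k+1$ it queries the $G$-neighbour of the currently opened cluster that is smallest under some fixed labelling, defaulting to an arbitrary un-queried vertex if no such neighbour exists. Applied to $A$, Lemma~\ref{thm:pivotal} then reduces the problem to controlling
\[
\eps_N := \sup_{(\vec v, \sigma),\, k} P^G_{1/N, h}\bigl[v_{k+1} \text{ pivotal for } A \bigm| A \cap \Expl_k(\vec v, \sigma)\bigr],
\]
where the supremum ranges over histories that intersect $A$. For $v_{k+1}$ to be pivotal it must be a $G$-neighbour of the already-built $\cC_0$ (automatic under BFS), and opening it must connect $\cC_0$ to a green vertex through the unexplored part of $G$.

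The main work, and where the ``asymptotic vertex-transitivity'' machinery developed in Section~\ref{sec:transitivity} enters, is to show that $\eps_N \leq m_h(\lambda) + o(1)$ as $N \to \infty$, \emph{uniformly} in the exploration history. By the spatial Markov property of the Poisson process, the unexplored vertices still form a Poisson process of intensity $N\lambda$, and Bernoulli$(1/N)$ site percolation on $G_g$ of such a process has the intensity-$\lambda$ RCM as its marginal. Hence the cluster of $v_{k+1}$ in the unexplored percolated graph, together with the ghost field restricted to it, ought to resemble the cluster of the origin in an independent intensity-$\lambda$ RCM with ghost field, whose probability of meeting $\cG$ is exactly $m_h(\lambda)$. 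Establishing this \emph{uniformly} over every vertex $v_{k+1}$ and every exploration history — rather than merely at a ``typical'' origin — is the main obstacle, and is precisely what the bounded-support hypothesis on $g$ is designed to make tractable: the unexplored region always contains a full translate of $\Lambda_R$ around $v_{k+1}$, and the influence of the finite explored set decays because edges beyond range $R$ are forbidden.

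Given the uniform bound, Lemma~\ref{thm:pivotal} yields $P^G_{(1/N)(1-\eps_N)} \preceq P^G_{1/N,h}[\omega \in \cdot \mid A]$. Identifying the open vertices with a thinned Poisson process, the $\eta$-marginal on the left is the box RCM of intensity $N\lambda \cdot (1/N)(1-\eps_N) = \lambda(1-\eps_N) \to \lambda'$ as $N \to \infty$, while the right-hand side converges in distribution to $\P^n_{\lambda,g,h}[G^{nR}_g \in \cdot \mid \cC_0 \cap \cG = \emptyset]$. A standard tightness/coupling argument then promotes the finite-$N$ dominance to the desired limit $\P^n_{\lambda',g} \preceq \P^n_{\lambda,g,h}[G^{nR}_g \in \cdot \mid \cC_0 \cap \cG = \emptyset]$ on increasing events, completing the proof.
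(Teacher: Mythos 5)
Your overall strategy is the same as the paper's: apply Lemma~\ref{thm:pivotal} to Bernoulli$(1/N)$ site percolation on the intensity-$N\lambda$ RCM with $A = \{\cC_0\cap\cG=\emptyset\}$, explore the cluster of $\zerovec$ first so that a pivotal vertex must be joined to a green vertex through the unexplored part, bound that probability by roughly $m_h(\lambda)$, and let $N\to\infty$. The genuine gap is in how you treat the randomness of $G$. Lemma~\ref{thm:pivotal} is a statement about a \emph{fixed} finite graph: its hypothesis is a bound on $P^G_{1/N,h}[v_{k+1}\text{ pivotal}\mid A\cap\Expl_k(\vec v,\sigma)]$, a conditional probability given the realization of $\eta_N$ and the edge uniforms. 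You define $\eps_N$ as a supremum of these quantities and assert $\eps_N\le m_h(\lambda)+o(1)$, but this cannot hold surely: for some realizations of $G$ (e.g.\ a vertex with an atypically large number of neighbours) the quenched probability of reaching a green vertex is close to $1$. Your justification via the spatial Markov property is an \emph{annealed} computation --- it controls the probability after averaging over the unexplored Poisson points --- and does not yield the quenched bound that Lemma~\ref{thm:pivotal} requires. The paper's resolution is to introduce the good event $E_{\delta,N}$ on which the quenched bound $m_h(\lambda)+\delta$ holds for all $k$, to prove $\P_{N\lambda,g}(E_{\delta,N})\to 1$ via the asymptotic vertex-transitivity result (Proposition~\ref{thm:transitivity}, whose proof goes through discretization and large-deviation estimates rather than your ``the unexplored region contains a translate of $\Lambda_R$'' heuristic), and then to carry the indicator $1_{E_{\delta,N}}$ through the final chain of inequalities, paying an error $\P_{N\lambda,g}(E_{\delta,N}^c)$ that vanishes as $N\to\infty$, before sending $\delta\downarrow 0$ using continuity of $\lambda\mapsto\P^n_{\lambda,g}(\,\cdot\,)$ in the bounded box. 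Without this device your final ``tightness/coupling'' step has nothing to act on, since the stochastic domination produced by Lemma~\ref{thm:pivotal} is only valid on a high-probability set of graphs, not for every $G$.

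A secondary omission: to pass from ``$v_{k+1}$ is pivotal'' to ``$v_{k+1}$ is connected to a green vertex through unexplored open vertices'' you must also discharge the conditioning on $A\cap\Expl_k(\vec v,\sigma)$. The paper does this via Harris--FKG (the connection event is increasing, $A$ is decreasing) together with the independence of that event from the explored coordinates; this is where the decreasing nature of $A$ is actually used, and it should appear explicitly in your argument.
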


\begin{proof}
	Fix $n \geq 1$ and let $\delta \in (0,1-m_h(\lambda))$.
	We will use $\cC_0$ to denote the component of $\zerovec$
	restricted to $\Lambda_{nR}$.

	We will apply Lemma~\ref{thm:pivotal} to the graph $G$
	from Definition~\ref{def:box} with
	$A = \{ \cC_0 \cap \cG = \emptyset\}$
	and $\eps = m_h(\lambda) + \delta$.
	This follows the proof of \cite[Lemma 8]{vanneuville}
	very closely,
	but to illustrate where the differences between our setting
	and that of \cite{vanneuville} become important,
	we will write out the argument.
	Let $V$ be the vertex set of $G$ and fix an arbitrary ordering of $V$
	in which $\zerovec$ comes first.
	Define an exploration of $V$ as follows:
	at each step, among the vertices connected to $\zerovec$
	either directly or via a path of open vertices we have already explored,
	reveal the state of the one with the lowest index.
	Continue this process until there are no more such vertices,
	at which point we have revealed $\cC_0$ in $G_g(\omega_N \cup \{\zerovec\})$.
	Reveal the states of the remaining vertices
	in the order we fixed earlier.

	Let $(\vec{v},\sigma) \in \Vorder \times \{0,1\}^{V\setminus\{\zerovec\}}$
	such that $\{\cC_0\cap\cG = \emptyset \} \cap \Expl(\vec{v},\sigma) \not= \emptyset$.
	If $\Expl_k(\vec{v},\sigma)$ holds
	and $v_{k+1}$ is pivotal for $\{\cC_0 \cap \cG = \emptyset\}$,
	then this implies that $v_{k+1}$ is connected
	to a green vertex
	by a path of open vertices none of which belong to
	$\{\zerovec, v_1, v_2, \dots, v_{k}\}$.
	Denote this event by $B_{k+1}$.
	By the Harris-FKG inequality applied to
	the product measure $P^G_{1/N}[ \,\cdot \,|\, \Expl_k(\vec{v},\sigma)]$,
	the increasing event $B_{k+1}$,
	and the decreasing event $\{\cC_0 \cap \cG = \emptyset\}$,
	we have
	\begin{align*}
		P^G_{1/N,h}[ v_{k+1} \text{ is }&\text{pivotal for } \{ \cC_0 \cap \cG = \emptyset \}
			\,|\, \cC_0 \cap \cG = \emptyset, \Expl_k(\vec{v},\sigma) ]\\
		&\leq
		P^G_{1/N,h}[ B_{k+1} \,|\, \cC_0 \cap \cG = \emptyset, \Expl_k(\vec{v},\sigma) ]\\
		&\leq
		P^G_{1/N,h}[ B_{k+1} \,|\, \Expl_k(\vec{v},\sigma) ].
	\end{align*}
	Then, because $B_{k+1}$ depends only on $V \setminus \{\zerovec,v_1,\dots,v_{k}\}$,
	$P^G_{1/N,h}[ B_{k+1} \,|\, \Expl_k(\vec{v},\sigma) ] = P^G_{1/N,h}[B_{k+1}]$.

	Now we differ from \cite{vanneuville}
	since $G$ is a random graph.
	Let
	$$
		E_{\delta,N} := \left\{
			\max_{k \in \{0,\dots,|V|\}} P^G_{1/N,h}[B_{k+1}] \leq m_h(\lambda) + \delta
		\right\},
	$$
	and then conditional on $E_{\delta,N}$,
	the conditions of
	Lemma~\ref{thm:pivotal} are met so
	$$P^G_{\frac{1}{N}(1 - m_h(\lambda) - \delta)} \preceq
	P^G_{1/N,h}\left(G^{nR}_g \in \cdot
	\,|\, (\cC_0 \cap \Lambda_{nR}) \cap \cG = \emptyset\right).$$

	We claim that $\P_{N\lambda,g}(E_{\delta,N}) \to 1$ as $N \to \infty$.
	We defer the proof of this claim until Proposition~\ref{thm:transitivity}
	in the next section.
	To see that the claim implies our result,
	let $A$ be an increasing event
	in the $\sigma$-algebra corresponding to $\P^n_{\lambda,g}$.
	Then
	\begin{align*}
		\P^n_{\lambda,g,h}&[G^{nR}_g \in A \,|\, (\cC_0 \cap \Lambda_{nR}) \cap \cG = \emptyset]\\
		&=
		\E^n_{N\lambda,g}\!\left( P^G_{1/N}[G^{nR}_g \in A \,|\, (\cC_0 \cap \Lambda_{nR}) \cap \cG = \emptyset ] \right)\\
		&\geq
		\E^n_{N\lambda,g}\!\left( P^G_{1/N}[G^{nR}_g \in A \,|\, (\cC_0 \cap \Lambda_{nR}) \cap \cG = \emptyset ] \times 1_{E_{\delta,N}} \right)\\
		&\geq
		\E^n_{N\lambda,g}\!\left( P^G_{\frac{1}{N}(1 - m_h(\lambda) - \delta)}[G^{nR}_g \in A] \times 1_{E_{\delta,N}} \right)\\
		&\geq
		\E^n_{N\lambda,g}\!\left( P^G_{\frac{1}{N}(1 - m_h(\lambda) - \delta)}[G^{nR}_g \in A] \right) - \P_{N\lambda,g}(E_{\delta,N}^c)\\
		&=
		\P^n_{\lambda(1 - m_h(\lambda)-\delta),g}\!\left( G^{nR}_g \in A \right) - \P_{N\lambda,g}(E_{\delta,N}^c).
	\end{align*}
	We can take $N \to \infty$ in the above inequality and get
	$$
		\P^n_{\lambda,g,h}[G^{nR}_g \in A \,|\, (\cC_0 \cap \Lambda_{nR}) \cap \cG = \emptyset]
		\geq
		\P^n_{\lambda(1 - m_h(\lambda)-\delta),g}\!\left( G^{nR}_g \in A \right),
	$$
	and then note that since $\Lambda_{nR}$ is bounded,
	$\lambda \mapsto \P^n_{\lambda,g}\!\left( G^{nR}_g \in A \right)$
	is continuous.
	Therefore we can take the limit $\delta \downarrow 0$
	to conclude
	$$
		\P^n_{\lambda,g,h}[G^{nR}_g \in A \,|\, (\cC_0 \cap \Lambda_{nR}) \cap \cG = \emptyset]
		\geq
		\P^n_{\lambda(1 - m_h(\lambda)),g}\!\left( G^{nR}_g \in A \right),
	$$
	as claimed.
\end{proof}

\begin{proof}[Proof of Theorem~\ref{thm:decrease-lambda} when $g$ has bounded support]
	This follows from Lemma~\ref{thm:disjoint-condition}
	exactly as the equivalent lemma in \cite{vanneuville}
	implies the equivalent theorem for Bernoulli percolation,
	noting that since we assumed all edges of the RCM are shorter than some $R > 0$,
	the event $\{|\cC_0| \geq n\}$ depends only on $G^{nR}_{g}$.
\end{proof}

\begin{proof}[Proof of Theorem~\ref{thm:exponential-decay}]
	Theorem~\ref{thm:exponential-decay}
	can be derived from
	Theorem~\ref{thm:decrease-lambda}
	by the same argument used in \cite{vanneuville} to derive Theorem 1 from Theorem 2.
\end{proof}

\subsection{Asymptotic vertex-transitivity}
\label{sec:transitivity}

\begin{definition}
	\label{def:grid}
	For $K > 0$ and $s > 0$,
	let $\L_{K,s}$ be the
	rescaled lattice $s \mathbb{Z}^d \cap \overline{\Lambda_{K-s}}$.
	Assume $K/s \in \N$,
	then $\Lambda_K$ is partitioned by $(\Lambda_s(x) : x \in \L_{K,s})$.
	Given such $K$ and $s$,
	for $x \in \Lambda_K$ let $\hat{x}$ denote the unique element of $\L_{K,s}$
	such that $x \in \Lambda_s(\hat{x})$.
\end{definition}

\begin{definition}
	Let $K > 0$ and suppose $\cP \subset \Lambda_K$ is a discrete set of points.
	Let $s > 0$ be such that $K/s \in \N$.
	Let $g$ be a connection function,
	and for distinct $x,y \in \cP$,
	define
	\begin{align}
		\label{eq:g-hat}
		\hat{g}_s(x,y) :=
		\sup\{g(p-q) : p \in \Lambda_s(\hat{x}), q \in \Lambda_s(\hat{y})\}.
	\end{align}
	For the same uniform random variables $U_{x,y}$ used to construct $G_g(\eta)$,
	define a new graph $\Ghat_s(\eta)$ on vertex set $\eta$
	by including the edge $xy$ if and only if
	$U_{x,y} \leq \hat{g}_s(x,y)$.
\end{definition}

\begin{remark}
	This is similar to the discretised model
	used in the proofs of \cite[Section 7]{noise-sensitivity},
	except that we have left the vertex set unchanged
	and we are using a much more general connection function.
\end{remark}

We need a version of $m_h(\lambda)$
for our discretised model.

\begin{definition}
	Let $K$ and $s$ be as above and $\lambda > 0$.
	Let $\eta$ be a Poisson process of intensity $\lambda$ inside $\Lambda_K$.
	Let $h > 0$ and
	let $\mathcal{G} \in \{0,1\}^{\eta^\zerovec}$ be a ghost field of intensity $1-e^{-h}$.
	Then let
	\begin{align*}
		\hat{m}^{K,s}_h(\lambda)
		:=
		\P_{\lambda,g,h}[\text{the component containing $\zerovec$
			in $\hat{G}_s(\eta^\zerovec)$
			intersects $\mathcal{G}$}].
	\end{align*}
\end{definition}

Since we have used the same uniform random variables,
$G(\eta)$ and $\Ghat_s(\eta)$
are naturally coupled.
Under this coupling,
if $\eta$ is a Poisson process,
the two graphs are equal with high probability
if $s$ is small.

\begin{lemma}
\label{thm:G-coupling}
	Let $K > 0$ and $\lambda > 0$.
	Let $\eta$ be a Poisson point process of intensity $\lambda$ on $\Lambda_K$.
	There exists a constant $C = C(K,d)$
	such that for all $s \in \{K/2, K/3, \dots\}$,
	$$
	\P_{\lambda,g}[ \Ghat_s(\eta^\zerovec) \not= G_g(\eta^\zerovec) ]
	\leq C (\lambda^2 + \lambda) s.
	$$
\end{lemma}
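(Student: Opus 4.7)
The plan is to bound the probability that the two graphs differ by the expected number of ``extra'' edges and then evaluate that expectation using the Mecke formula. Since $\hat{g}_s(x,y) \geq g(x-y)$ by construction, the two graphs share the vertex set $\eta^\zerovec$, and $\hat{G}_s$ differs from $G_g$ exactly when some pair $\{x,y\} \subset \eta^\zerovec$ satisfies $g(x-y) < U_{x,y} \leq \hat{g}_s(x,y)$. A union bound, followed by integrating out the independent uniforms $U_{x,y}$, gives
\[
\P_{\lambda,g}[\hat{G}_s(\eta^\zerovec) \neq G_g(\eta^\zerovec)]
\leq \lambda \int_{\Lambda_K}\bigl(\hat{g}_s(\zerovec, x) - g(x)\bigr)\, dx
+ \frac{\lambda^2}{2} \int_{\Lambda_K^2}\bigl(\hat{g}_s(x,y) - g(x-y)\bigr)\, dx\, dy,
\]
where the double integral comes from Mecke's formula for unordered pairs of points in $\eta$ and the linear term handles the pairs $\{\zerovec, x\}$.

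Next I reduce both integrals to a single scalar estimate via the radial monotonicity of $g$. For any $p \in \Lambda_s(\hat{x})$ and $q \in \Lambda_s(\hat{y})$ the triangle inequality gives $|p - q| \geq |x-y| - 2s\sqrt{d}$, so
\[
\hat{g}_s(x,y) \leq g\bigl(\max(|x-y| - t,\, 0) \cdot e_1\bigr), \qquad t := 2s\sqrt{d}.
\]
After the substitution $z = y - x$ and bounding the volume of possible $x$-locations by $K^d$, both integrals reduce (up to such a factor) to controlling
\[
I(t) := \int_{B_{K\sqrt{d}}} \bigl(g(\max(|z|-t,0) e_1) - g(z)\bigr)\, dz.
\]

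To bound $I(t)$ I would use a layer-cake representation. Because $g$ is radial and non-increasing, for each $u \in [0,1]$ the super-level set $\{g > u\}$ is a ball $B_{\rho(u)}$ with $\rho(u) := \sup\{r \geq 0 : g(re_1) > u\}$, and similarly $\{g(\max(|z|-t,0) e_1) > u\} = B_{\rho(u)+t}$. Fubini then gives
\[
I(t) = \int_0^1 \Bigl[\mathrm{Vol}\bigl(B_{\min(\rho(u)+t,\, K\sqrt{d})}\bigr) - \mathrm{Vol}\bigl(B_{\min(\rho(u),\, K\sqrt{d})}\bigr)\Bigr]\, du.
\]
The bracketed volume difference vanishes when $\rho(u) \geq K\sqrt{d}$ and otherwise is at most $\omega_d \cdot d \cdot (K\sqrt{d} + t)^{d-1} \cdot t$. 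Using the hypothesis $s \leq K/2$ to absorb $t$ into a constant depending only on $K$ and $d$ yields $I(t) \leq C(K,d)\, s$, and combining the two terms produces the claimed bound.

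The one subtle point, and the reason the constant depends only on $K$ and $d$ (rather than on $g$ through, say, $\int g$), is that the layer-cake integral over $u$ lives on the bounded interval $[0,1]$ and the volume estimate $\mathrm{Vol}(B_{r+t}) - \mathrm{Vol}(B_r) \leq C(K,d)\, s$ is uniform in $r \leq K\sqrt{d}$. This uniform-in-$u$ bound is the main obstacle; once it is in place, assembling the two Mecke terms gives $\P_{\lambda,g}[\hat{G}_s(\eta^\zerovec) \neq G_g(\eta^\zerovec)] \leq C(K,d)(\lambda + \lambda^2)\, s$.
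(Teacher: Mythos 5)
Your proof is correct, and its first half --- bounding the probability that the graphs differ by the expected number of pairs with $U_{x,y} \in (g(x-y), \hat{g}_s(x,y)]$, evaluating that expectation with the Mecke equation, and reducing everything to the single integral $\int \bigl(\hat{g}_s(\zerovec,y)-g(y)\bigr)\,\d y$ via the bound $\hat{g}_s(x,y) \leq g\bigl(\max(\|x-y\|-2\sqrt{d}s,0)\,e_1\bigr)$ --- is exactly the paper's argument. Where you diverge is in estimating that integral: the paper passes to spherical coordinates, splits the radial integral into a near-origin piece of order $s^d$ and a piece controlled by $\sup_r\bigl((r+2\sqrt{d}s)^{d-1}-r^{d-1}\bigr)$ times $\int g(r)\,\d r$ over a bounded range, whereas you use a layer-cake representation: by radial monotonicity every super-level set $\{g>u\}$ is a ball $B_{\rho(u)}$, the perturbed function's super-level set is $B_{\rho(u)+t}$, and the volume difference of concentric balls of radius at most $K\sqrt{d}$ is $O(K,d)\cdot s$ uniformly in $u \in [0,1]$. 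Your route is arguably cleaner: it makes it immediate that the constant depends only on $K$ and $d$ (in the paper one must additionally note that $g \leq 1$ bounds $\int_0^{\sqrt{d}K} g(r)\,\d r$ by $\sqrt{d}K$, since the lemma's statement does not allow $C$ to depend on $g$), and it exploits the radial monotonicity of $g$ in a single geometric step rather than through an integral rearrangement. Both arguments yield the same bound.
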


\begin{proof}
	Note that $\Ghat_s(\eta^\zerovec) \not= G_g(\eta^\zerovec)$
	if and only if there exist distinct
	$x,y \in \eta^\zerovec$ such that $U_{x,y} \in (g(x-y),\hat{g}_s(x,y)]$.
	The expected number of such pairs of vertices is
	$$
		\frac{1}{2} \E_{\lambda,g} \left[ \sum_{(x,y) \in (\eta^\zerovec)_{\not=}^2}
		1( U_{x,y} \in (g(x-y),\hat{g}_s(x,y)]) \right]
		=
		\frac{1}{2} \E_{\lambda}\left[ \sum_{(x,y) \in (\eta^\zerovec)_{\not=}^2}
			( \hat{g}_s(x,y) - g(x-y) )
		\right].
	$$
	By the Mecke equation,
	the right-hand side is equal to
	\begin{align*}
		\frac{\lambda^2}{2} \int_{\Lambda_K}\int_{\Lambda_K}
			( \hat{g}_s(x,y) &- g(x-y) )
		\d x \d y
		+
		\lambda \int_{\Lambda_K} ( \hat{g}_s(\zerovec,y) - g(y) ) \d y\\
		&\leq
		\left(\frac{\lambda^2}{2} K^d + \lambda \right) \int_{B(\zerovec,\sqrt{d}K)}
			( \hat{g}_s(\zerovec,y) - g(y) )
		\d y.
	\end{align*}
	Since the diameter of the boxes $\Lambda_s(x)$ is $\sqrt{d}s$
	and $g(y)$ is a decreasing function of $\|y\|$,
	for any $y \in \R^d \setminus B(\zerovec,2\sqrt{d}s)$
	we have the bound $\hat{g}_s(\zerovec,y) \leq g(\|y\| - 2\sqrt{d}s)$.
	Therefore,
	using spherical coordinates,
	\begin{align*}
		\int_{B(\zerovec,\sqrt{d}K)}
		( \hat{g}_s(\zerovec,y) - g(y) )
		\d y
		&\leq
		\theta_d (2\sqrt{d}s)^{d}
		+
		d\theta_d \int_{2\sqrt{d}s}^{\sqrt{d}K} r^{d-1}(g(r - 2\sqrt{d}s) - g(r))\d r,
	\end{align*}
	where $\theta_d$ is the volume of the unit ball in $\R^d$.
	The latter integral can be separated and written
	$$
		\int_0^{2\sqrt{d}s} r^{d-1}g(r)\d r
		+
		\int_{0}^{\sqrt{d}(K-2s)} ((r+2\sqrt{d}s)^{d-1} - r^{d-1}) g(r)\d r.
	$$
	The first integral is $O(s^d)$,
	and since $\int_0^{\infty}g(r)\d r < \infty$,
	the second integral is bounded by a constant multiple of
	$$\sup_{r \in [0,\sqrt{d}(K-2s)]} ( (r+2\sqrt{d}s)^{d-1} - r^{d-1})
	\leq
	(d-1)(\sqrt{d}K)^{d-2} 2\sqrt{d} s + O(s^2),$$
	where the constant in the $O(s^2)$ term depends only on $K$ and $d$.
	Therefore the expected number of edges in $\hat{G}_s(\eta)$
	which are not in $G_g(\eta)$ is $O((\lambda^2+\lambda) s)$,
	so by Markov's inequality the two graphs are equal
	with probability $1 - O((\lambda^2+\lambda) s)$,
	as claimed.
\end{proof}

\begin{remark}
	The proof of Lemma~\ref{thm:G-coupling}
	is the only place we use the fact that $g(\|x\|)$
	is non-increasing in $\|x\|$.
	The method should work under weaker assumptions,
	but the definition of $\Ghat$ may need to be modified.
	The proof does not need many changes if we assume
	that $g$ is continuous.
\end{remark}

\begin{remark}
	The following proposition completes the proof of Lemma~\ref{thm:disjoint-condition},
	but it would be true if we replaced the event by any increasing event
	which depends only on the graph structure of a RCM in a bounded region.
	Similar arguments could also establish a lower bound on the conditional probability
	for vertices $x$ which are not close to the boundary of $\Lambda_K$.
	
	The method is therefore very flexible,
	and could be applied to many other percolation-type
	problems for the random connection model
	and related models.
\end{remark}

\begin{proposition}
	\label{thm:transitivity}
	Let $K \in \N$,
	$\lambda \in (0,\infty)$,
	and $h > 0$.
	Let $\delta > 0$, then
	$$
		\P_{N\lambda,g}\!\left(
			\sup_{x \in \eta_N^\zerovec} P_{1/N,h}\!\left[
				\parbox{4.5cm}{\rm\centering $x$ is connected to a green vertex in $G_g(\omega_N \cup \{x\})$}
			\right] > m_h(\lambda) + \delta
		\right) \to 0
	$$
	as $N \to \infty$.
\end{proposition}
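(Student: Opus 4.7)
The plan is to pass from $G_g$ to the discretised graph $\hat{G}_s$ from this section. Since $\hat{g}_s \geq g$, $\hat{G}_s$ contains $G_g$ as a subgraph, so for every $x \in \eta_N^\zerovec$,
\[
P_{1/N,h}\bigl[\text{$x$ is connected to $\cG$ in $G_g(\omega_N \cup \{x\})$}\bigr]
\;\leq\;
\hat{\pi}_s(x,\eta_N)
:=
P_{1/N,h}\bigl[\text{$x$ is connected to $\cG$ in $\hat{G}_s(\omega_N \cup \{x\})$}\bigr].
\]
The crucial feature of $\hat{G}_s$ is that its edge probabilities depend only on the cells of their endpoints, so $\hat{\pi}_s(x,\eta_N)$ is a function only of $\hat{x}$ and of the cell counts $\config(c) := |\eta_N \cap \Lambda_s(c)|$ for $c \in \L_{K,s}$. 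Moreover, since the event is increasing in $\eta_N$, one obtains the further bound $\hat{\pi}_s(x,\eta_N) \leq F(\hat{x},\config)$ for a deterministic function $F$ (independent of whether $x \in \eta_N$ or $x = \zerovec$). Thus $\sup_{x \in \eta_N^\zerovec}\hat{\pi}_s(x,\eta_N)$ is at most $\max_{c \in \L_{K,s}} F(c,\config)$, a maximum over the $(K/s)^d$ cells.

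Fix $\delta > 0$. Applying Lemma~\ref{thm:G-coupling} on a sufficiently large enclosing box $\Lambda_R$, together with the fact that the cluster of $\zerovec$ in $G_g$ is almost surely bounded, one can choose $s > 0$ so small that the infinite-volume analogue $\hat{m}_h^{\infty,s}(\lambda)$ of $\hat{m}_h^{K,s}(\lambda)$ satisfies $\hat{m}_h^{\infty,s}(\lambda) \leq m_h(\lambda) + \delta/3$. Next, each $\config(c) \sim \mathrm{Poi}(N\lambda s^d)$, so by Poisson concentration and a union bound, for any $\eps > 0$,
\[
\P_{N\lambda,g}\!\bigl[\,\forall c \in \L_{K,s}:\; |\config(c) - N\lambda s^d| \leq \eps N\lambda s^d\,\bigr] \to 1
\quad\text{as } N\to\infty.
\]
On this event, conditional on $\eta_N$, the thinned cell counts $M_c := |\omega_N \cap \Lambda_s(c)| \sim \mathrm{Bin}(\config(c),1/N)$ are independent across $c$, and by Le Cam's inequality each is within total variation distance $O(1/N) + O(\eps)$ of $\mathrm{Poi}(\lambda s^d)$.

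Because $F(c,\config)$ is obtained by integrating a $[0,1]$-valued function of $(M_c)_c$ together with the edge uniforms and ghost field, the total-variation estimate transfers: $F(c,\config) \leq F^{\mathrm{ideal}}(c) + o_N(1) + O(\eps)$ uniformly in $c$, where $F^{\mathrm{ideal}}(c)$ is the same probability computed when the underlying vertex set is a Poisson$(\lambda)$ process in $\Lambda_K$ with connection function $\hat{g}_s$. Monotonically extending that Poisson process from $\Lambda_K$ to all of $\R^d$ and invoking translation invariance yields $F^{\mathrm{ideal}}(c) \leq \hat{m}_h^{\infty,s}(\lambda)$ for every cell $c$. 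Combining the above estimates and taking $\eps$ small, then $N \to \infty$, gives $\sup_{x \in \eta_N^\zerovec}\pi(x,\eta_N) \leq m_h(\lambda) + \delta$ with probability tending to $1$, as required.

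The main obstacle is the uniformity over the $\Theta(N)$ points $x \in \eta_N^\zerovec$, for which a naive pointwise union bound would be useless. The passage to $\hat{G}_s$ is precisely what collapses this uniformity to a maximum over the finitely many cells of $\L_{K,s}$, making the asymptotic-transitivity argument go through; once this reduction is done, Poisson concentration (for $\config(c)$) and Poisson approximation (for $M_c$, via Le Cam) handle the per-cell comparison. A subsidiary technical point is relating the discretised infinite-volume constant $\hat{m}_h^{\infty,s}(\lambda)$ back to $m_h(\lambda)$, which follows from Lemma~\ref{thm:G-coupling} applied on a suitably large enclosing box.
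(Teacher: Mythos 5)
Your overall strategy --- pass to the discretised graph $\Ghat_s$, use concentration of the cell counts plus Le Cam to compare with an ideal $\mathrm{Poisson}(\lambda)$ model, and relate the discretised quantity back to $m_h(\lambda)$ via Lemma~\ref{thm:G-coupling} on an enlarged box --- is the same as the paper's. But there is a genuine gap at the key reduction. You assert that $\hat\pi_s(x,\eta_N)$ is a function only of $\hat x$ and the cell counts $\config$, which amounts to reading $P_{1/N,h}$ as \emph{averaging over the edge uniforms} $U_{x,y}$. That is not the reading the proposition needs: in Lemma~\ref{thm:disjoint-condition} the bound is fed into Lemma~\ref{thm:pivotal}, whose hypothesis must hold for the \emph{fixed realised graph} $G = G_g(\eta_N^\zerovec)$, edges included (the event $E_{\delta,N}$ is measurable with respect to $\eta$ \emph{and} the uniforms, and the outer measure in the proposition is $\P_{N\lambda,g}$, not $\P_{N\lambda}$). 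With the edges quenched, two points in the same cell can have very different conditional probabilities --- one may happen to have many realised edges into a given cell, another very few --- so the supremum over the $\Theta(N)$ points of $\eta_N^\zerovec$ does \emph{not} collapse to a maximum over the $(K/s)^d$ cells. Your argument controls only the expectation over the uniforms of the quenched probability for each cell-count configuration, and a bound on an expectation gives no control of a supremum over $\Theta(N)$ correlated random variables.

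What is missing is a concentration statement for the realised edges: with probability $1 - e^{-cN}$ over the uniforms, \emph{every} reference point and \emph{every} prescribed pattern of adjacencies to the cells sees an empirical count within a factor $(1\pm\alpha)$ of its mean, uniformly over all the relevant index choices. This is exactly the role of Lemma~\ref{thm:edge-ldp} (together with the indexing of the uniforms in Definition~\ref{def:uniform-tensor}), which your proposal omits entirely; it is the genuinely new ingredient of the proof rather than a routine detail. Two smaller points: the proposition is stated for all $\lambda \in (0,\infty)$, so you cannot invoke ``the cluster of $\zerovec$ in $G_g$ is almost surely bounded''; and the infinite-volume quantity $\hat m_h^{\infty,s}(\lambda)$ is never defined --- the finite-box comparison $\hat m_h^{2K,s}(\lambda) \leq m_h(\lambda) + \delta/4$, obtained from monotonicity and Lemma~\ref{thm:G-coupling} applied on $\Lambda_{2K}$, suffices and avoids both issues.
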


\begin{lemma}
	\label{thm:le-cam}
	Let $S$ be a $\mathrm{Binomial}(n,p)$ random variable,
	and let $T \sim \mathrm{Poisson}(np)$.
	Then the total variation distance between $S$ and $T$ is bounded by $9 p$.
\end{lemma}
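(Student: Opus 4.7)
The plan is to bound the total variation distance via a maximal coupling, which yields Le Cam's classical inequality, and then handle all regimes of $(n,p)$ by a short case split. On a common probability space, take independent pairs $(X_i, Y_i)_{i=1}^n$ with $X_i \sim \mathrm{Bernoulli}(p)$ and $Y_i \sim \mathrm{Poisson}(p)$, each pair coupled maximally. Since $\P(X_i = 0) = 1-p \leq e^{-p} = \P(Y_i = 0)$ and $\P(X_i = 1) = p \geq pe^{-p} = \P(Y_i = 1)$, a direct computation shows $\P(X_i \neq Y_i) = p(1-e^{-p}) \leq p^2$. Setting $S := \sum_i X_i$ and $T := \sum_i Y_i$ yields the correct marginals, and a union bound gives Le Cam's inequality $d_{\mathrm{TV}}(S, T) \leq \P(S \neq T) \leq np^2$.

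This already suffices in two regimes: if $p \geq 1/9$ then $d_{\mathrm{TV}}(S, T) \leq 1 \leq 9p$; if $np \leq 9$ then $np^2 = (np)\,p \leq 9p$. The only remaining case is $p < 1/9$ together with $np > 9$, which is not covered by the Le Cam bound; there I would invoke the sharper Barbour--Hall estimate (proved via the Stein--Chen method), $d_{\mathrm{TV}}(\mathrm{Bin}(n,p), \mathrm{Poi}(np)) \leq p(1 - e^{-np}) \leq p \leq 9p$, and cite it as a standard Poisson-approximation result rather than reproduce the Stein--Chen computation.

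The main obstacle is precisely this large-$np$ regime. There the naive coupling bound is genuinely loose: $\P(S \neq T)$ grows to $1$, whereas the total variation distance itself stays of order $p$, essentially because both distributions concentrate around their common mean $np$ with comparable shapes, so that typical disagreements $X_i \neq Y_i$ do not produce distinguishable sums. Recovering the correct linear-in-$p$ bound in this regime is exactly what Stein--Chen is designed for; that said, the generous constant $9$ in the statement (well above the sharper Barbour--Hall constant $1$) suggests that in the intended applications only the easy regimes matter, and the elementary Le Cam case split alone is enough.
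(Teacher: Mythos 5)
Your argument is correct. The paper itself disposes of this lemma in one line by citing Le Cam's original theorem, whose ``main result'' already contains a uniform bound of the form $9\max_i p_i$ on the total variation distance between a Poisson--binomial sum and the Poisson law with matching mean --- that is where the otherwise odd-looking constant $9$ in the statement comes from. You instead reconstruct most of the estimate by hand: the maximal coupling of $\mathrm{Bernoulli}(p)$ with $\mathrm{Poisson}(p)$ does give $\P(X_i\neq Y_i)=p(1-e^{-p})\le p^2$, the union bound gives $np^2$, and your two easy cases ($p\ge 1/9$, or $np\le 9$) are handled correctly. For the remaining regime you fall back on the Barbour--Hall bound $d_{\mathrm{TV}}\le p(1-e^{-np})$, which is a standard, correctly quoted result, so the proof closes. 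Two small observations: first, Barbour--Hall alone already yields $d_{\mathrm{TV}}\le p\le 9p$ in \emph{every} regime, so once you are willing to cite it the coupling argument and the case split are redundant (though they make the easy regimes self-contained and elementary); second, since the paper's own proof is itself a bare citation, your version is, if anything, more explicit than what the author wrote, at the cost of invoking a sharper external theorem than the one the paper names.
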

\begin{proof}
	Follows from the main result of \cite{le-cam}.
\end{proof}

\begin{definition}
	\label{def:R-event}
	For $n \in \N$, let $[n] := \{1,\dots, n\}$.
	Let $\lambda > 0$ and $N \in \N$.
	Given $K$ and $s$ as in Definition~\ref{def:grid}.
	for each $z \in \L_{K,s}$
	let $H_{N\lambda}(z) := | \eta \cap \Lambda_s(z) |$.
	Given $\alpha \in (0,1/2)$,
	define the event
	\[
		R_{s,\alpha,N}
		:= \left\{
			(1-\alpha) \lambda s^d N < H_{N\lambda}(z) < (1+\alpha) \lambda s^d N
			\text{ for all }
			z \in \L_{K,s}
		\right\}.
	\]
	Given a percolation configuration $\omega$ on $\eta_N^\zerovec$,
	for $z \in \L_{K,s}$ let $\Omega(z) := |\omega \cap \Lambda_s(z)|$,
	and let $\Omega := (\Omega(z) : z \in \L_{K,s}) \in \N_0^{\L_{K,s}}$.
\end{definition}

\begin{lemma}
	\label{thm:large-deviations}
	For $K, s, \lambda$ and $\alpha$ as above,
	there exists an $N_0 = N_0(s,\lambda,\alpha) \in \N$
	such that for all $N \geq N_0$,
	\begin{align}
		\label{eq:R-prob}
		\P_{N\lambda}[ R_{s,\alpha,N} ]
		&\geq
		1 - 2(K/s)^d \exp\left( -\frac{\lambda s^d \alpha^2}{24} N \right).
	\end{align}
	Moreover,
	there exists a constant $C = C(K,s,\lambda,d) > 0$
	such that on the event $R_{s,\alpha,N}$ we almost surely have
	\begin{align}
	\label{eq:config-prob}
		P_N( \Omega = \config )
		\leq
		e^{\alpha \lambda K^d} (1+\alpha)^{\sum_{z \in \L_{K,s}} \config(z) }
		p_{\lambda s^d}(\config) + \frac{C}{N},
	\end{align}
	for all $\config \in \N_0^{\L_{K,s}}$.
\end{lemma}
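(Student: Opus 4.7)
The plan is to handle the two claims independently, using a Chernoff bound for \eqref{eq:R-prob} and a Le Cam / total-variation argument for \eqref{eq:config-prob}.

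For \eqref{eq:R-prob}, each $H_{N\lambda}(z)$ is $\mathrm{Poisson}(\mu)$ with $\mu := \lambda s^d N$. The standard two-sided Chernoff bound for Poisson variables yields $\P(|H_{N\lambda}(z) - \mu| > \alpha \mu) \leq 2 e^{-\mu \alpha^2 / 3}$ for all $\alpha \in (0, 1/2)$, obtained by optimising the exponential moment bound and using the elementary inequality $(1+\alpha)\log(1+\alpha) - \alpha \geq \alpha^2/3$ (and the analogous lower-tail bound). Replacing $3$ by $24$ is a harmless looser constant. A union bound over the $(K/s)^d$ cells of $\L_{K,s}$ then gives the claimed bound on $\P_{N\lambda}(R_{s,\alpha,N}^c)$. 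The role of $N_0$ is simply to guarantee that $N$ is large enough for subsequent estimates, in particular the bound $9/N$ from Lemma~\ref{thm:le-cam}, to be meaningful.

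For \eqref{eq:config-prob}, conditional on $\eta$ the site-percolation variables are independent $\mathrm{Bernoulli}(1/N)$, so the counts $(\Omega(z))_{z \in \L_{K,s}}$ are conditionally independent with $\Omega(z) \sim \mathrm{Binomial}(H_{N\lambda}(z), 1/N)$. Lemma~\ref{thm:le-cam} places each conditional marginal within total variation $9/N$ of $\mathrm{Poisson}(\mu_z)$, where $\mu_z := H_{N\lambda}(z)/N$. Since total variation is sub-additive under independent products, the full conditional law of $\Omega$ sits within total variation $9(K/s)^d / N$ of $\bigotimes_z \mathrm{Poisson}(\mu_z)$. Applied to the singleton $\{\Omega = \config\}$ this yields, almost surely,
\[
	P_N(\Omega = \config)
	\leq
	\prod_{z \in \L_{K,s}}
		\frac{\mu_z^{\config(z)} e^{-\mu_z}}{\config(z)!}
	+ \frac{9(K/s)^d}{N}.
\]
To finish, I compare the Poisson product to $p_{\lambda s^d}(\config)$ deterministically on $R_{s,\alpha,N}$: there $\mu_z \in ((1-\alpha)\lambda s^d, (1+\alpha)\lambda s^d)$ for every $z$, so $\mu_z^{\config(z)}/(\lambda s^d)^{\config(z)} \leq (1+\alpha)^{\config(z)}$ and $e^{\lambda s^d - \mu_z} \leq e^{\alpha \lambda s^d}$. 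Multiplying over the $(K/s)^d$ cells and using $\alpha \lambda s^d (K/s)^d = \alpha\lambda K^d$ converts the bound into $e^{\alpha \lambda K^d} (1+\alpha)^{\sum_z \config(z)} p_{\lambda s^d}(\config)$, and \eqref{eq:config-prob} follows with $C := 9(K/s)^d$.

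There is no substantive obstacle; both parts are bookkeeping arguments. The only subtle step is translating the total-variation bound between the Binomial and Poisson product laws into a pointwise bound on the probability mass of a singleton, which is immediate from $|P(A) - Q(A)| \leq d_{TV}(P,Q)$ applied to $A = \{\Omega = \config\}$. One should also take care that the conditional independence of the $\Omega(z)$ given $\eta$ is used correctly, and that the bounds $(1+\alpha)^{\config(z)}$ and $e^{\alpha \lambda s^d}$ hold uniformly on $R_{s,\alpha,N}$ so that the product over cells behaves as advertised.
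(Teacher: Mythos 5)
Your proof is correct, and it follows the same two-part skeleton as the paper (concentration of the cell counts $H_{N\lambda}(z)$, then Le Cam plus a pointwise comparison of Poisson masses on $R_{s,\alpha,N}$), but each half uses a slightly different tool. For \eqref{eq:R-prob} the paper invokes Cram\'{e}r's theorem, which is only asymptotic and is the real reason the statement carries an $N_0$; your non-asymptotic Chernoff bound gives the inequality for every $N$, which is cleaner (your remark that $N_0$ is needed ``for the $9/N$ bound to be meaningful'' is therefore a slight misreading of why the paper needs it, but harmless). For \eqref{eq:config-prob} the paper bounds each conditional marginal separately, getting $P_N(\Omega(z)=\config(z)) \leq a_z + 9/N$, and then has to expand the product $\prod_z (a_z + 9/N)$ and argue that the cross terms are $O(1/N)$ because each $a_z$ is uniformly bounded; your route via subadditivity of total variation over the conditionally independent coordinates absorbs all of that bookkeeping into a single additive error $9(K/s)^d/N$ before comparing the Poisson product to $p_{\lambda s^d}(\config)$, yielding the explicit constant $C = 9(K/s)^d$. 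Both arguments are valid; yours avoids the product expansion at the cost of citing the standard TV facts (tensorisation and $|P(A)-Q(A)| \leq d_{TV}(P,Q)$), which you state correctly.
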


\begin{remark}
	If we consider only $\config \in \{0,1\}^{\L_{K,s}}$,
	a lower bound similar to \eqref{eq:config-prob}
	follows from the same arguments.
\end{remark}

\begin{proof}
	For any $z \in \L_{K,s}$,
	$H_{N\lambda}(z) \sim \mathrm{Poisson}(N\lambda s^d)$,
	and so it is equal in distribution to
	the sum of $N$ iid $\mathrm{Poisson}(\lambda s^d)$ random variables.
	By Cram\'{e}r's theorem
	\cite[Theorem I.4]{large-deviations},
	for $\alpha \in (0,1/2)$,
	$$
		\lim_{N \to \infty}\frac{1}{N}\mathbb{P}_{N\lambda}[ H_{N\lambda}(z) \geq (1+\alpha)\lambda s^d N ]
		= -\lambda s^d[ (1+\alpha)\log(1+\alpha) - \alpha]
		\leq -\frac{1}{4} \lambda s^d \alpha^2,
	$$
	and
	$$
	\lim_{N \to \infty}\frac{1}{N}\mathbb{P}_{N\lambda}[ H_{N\lambda}(z) \leq (1-\alpha)\lambda s^d N ]
	= -\lambda s^d[\alpha + (1 - \alpha)\log(1-\alpha)]
	\leq -\frac{1}{12} \lambda s^d \alpha^2.
	$$
	Therefore there exists an $N_0 = N_0(s,\lambda,\alpha)$
	such that for all $N \geq N_0$ and $z \in \L_{K,s}$,
	$\P_{N\lambda}[ H_{N\lambda}(z) \geq (1+\alpha)\lambda s^d N ] \leq e^{-\frac{1}{8} \lambda s^d \alpha^2 N}$
	and
	$\P_{N\lambda}[ H_{N\lambda}(z) \leq (1-\alpha)\lambda s^d N ] \leq e^{-\frac{1}{24} \lambda s^d \alpha^2 N}$
	Then \eqref{eq:R-prob} follows from the union bound.
	
	Next, let $\config \in \N_0^{\L_{K,s}}$.
	Conditional on $\eta_N$,
	$\Omega(z) \sim \mathrm{Binomial}(H_{N\lambda}(z), 1/N)$
	for all $z \in \L_{K,s}$,
	and these components are independent.
	By Lemma~\ref{thm:le-cam},
	$P_N(\Omega(z) = \config(z)) \leq p_{\frac{1}{N}H_{N\lambda}(z)}(\config(z)) + 9/N$.
	Conditional on $R_{s,\alpha,N}$,
	\begin{align*}
		p_{\frac{1}{N}H_{N\lambda}(z)}(\config(z))
		&=
		\frac{1}{\config(z)!} \left(\frac{H_{N\lambda}(z)}{N}\right)^{\config(z)}
		e^{-\frac{H_{N\lambda}(z)}{N}}\\
		&\leq
		\frac{1}{\config(z)!} ( (1+\alpha)\lambda s^d )^{\config(z)}
		e^{-(1-\alpha)\lambda s^d}\\
		&=
		e^{\alpha \lambda s^d} (1 + \alpha)^{\config(z)}
		p_{\lambda s^d}(\config(z)).
	\end{align*}
	Note that the final line is bounded by something which only depends on $\lambda s^d$,
	since $(1 + \alpha)^{n} p_{\lambda s^d}(n) \to 0$ as $n \to \infty$.
	Therefore we get the claimed bound
	\begin{align*}
		P_N(\Omega = \config) = \prod_{z \in \L_{K,s}} P_N(\Omega(z) = \config(z))
		&\leq
		\prod_{z \in \L_{K,s}} \left(e^{\alpha \lambda s^d} (1 + \alpha)^{\config(z)}
		p_{\lambda s^d}(\config(z)) + \frac{9}{N}\right)\\
		&\leq
		e^{\alpha \lambda K^d} (1+\alpha)^{\sum_{z \in \L_{K,s}} \config(z) }
		p_{\lambda s^d}(\config) + \frac{C}{N}.\qedhere
	\end{align*}
\end{proof}

\begin{remark}
	Lemma~\ref{thm:large-deviations} is a large deviations principle
	for the number of vertices in each box of our discretisation.
	The next result can be thought of as a large deviations principle
	for the number of edges between the boxes.
	First we will specify a convenient way of
	sampling the uniform random variables determining
	the edges of $G_g(\eta_N^\zerovec)$.
\end{remark}

\begin{definition}
	\label{def:uniform-tensor}
	Given $K$ and $s$ as in Definition~\ref{def:grid},
	let $(U_{i,j}^{w,z} : i,j \in \N, w,z \in \L_{K,s}, w \preceq z)$
	be a collection of uniform random variables,
	where $w \preceq z$ means that $w$ precedes $z$ in lexicographic order
	or $w=z$.
	Extend it to a collection
	$(U_{i,j}^{w,z} : i,j \in \N, w,z \in \L_{K,s} )$
	by setting $U_{i,j}^{w,z} = U_{j,i}^{z,w}$ if $z \prec w$.
	
	Given $\lambda$ and $N$,
	for each $z \in \L_{K,s}$
	label $\eta_N^\zerovec \cap \Lambda_s(z)$ in lexicographic order
	and let $i_x \in \N$ be the index of $x$ within the box $\Lambda_s(\hat{x})$.
	%
	When we construct the edges of $G_g(\eta_N^\zerovec)$,
	if $x, y \in \eta_N^\zerovec$ are distinct vertices,
	let $U_{x,y} = U^{\hat{x},\hat{y}}_{i_x, i_y}$,
	then use $(U_{x,y} : x,y \in \eta_N^\zerovec)$
	to construct the edges as before.
\end{definition}

\begin{lemma}
	\label{thm:edge-ldp}
	Suppose $(U^{w,z}_{i,j} : i,j \in \N, w,z \in \L_{K,s})$
	is the collection of uniform random variables
	from Definition~\ref{def:uniform-tensor}.
	Suppose we are given a symmetric matrix of non-negative values
	$(g_{w,z})_{w,z \in \L_{K,s}}$.
	Let $\alpha \in (0,1/2)$.
	Given $M \in \N$, $w \in \L_{K,s}$,
	$V = (z_1, \dots, z_{|V|}) \subseteq \L_{K,s} \setminus \{w\}$,
	$E \subseteq V$ and
	$\mathbf{k} = (k_{z_1},\dots, k_{z_{|V|}}) \in [\rup{M(1+\alpha)}]^{|V|}$,
	let $F_{w,V,E,\mathbf{k}}^M$ be the event that
	\begin{align*}
		 \left| \left\{
		 	j \in [\rup{M(1+\alpha)}] : 
		 	U^{w,z}_{j,k_z} \leq g_{w,z} \, \forall z \in E
		 	\text{ and }
		 	U^{w,z}_{j,k_z} > g_{w,z} \,\forall z \in V \setminus E
		 \right\} \right|
		 &\leq
		 M g^E_{V,w} (1+\alpha)^2,
		 \intertext{and}
		 \left| \left\{
		 j \in [\rdown{M(1-\alpha)}] : 
		 U^{w,z}_{j,k_z} \leq g_{w,z} \, \forall z \in E
		 \text{ and }
		 U^{w,z}_{j,k_z} > g_{w,z} \,\forall z \in V \setminus E
		 \right\} \right|
		 &\geq
		 M g^E_{V,w} (1-\alpha)^2,
	\end{align*}
	where $g^E_{V,w} := (\prod_{z \in E} g_{w,z})
	(\prod_{z \in V\setminus E} (1-g_{w,z}))$.
	Let
	$$
		F^M := \bigcap_{w \in \L_{K,s}}
		\bigcap_{V \subseteq \L_{K,s} \setminus \{w\}}
		\bigcap_{E \subseteq V}
		\bigcap_{\mathbf{k} \in [\rup{M(1+\alpha)}]^{|V|}}
		F_{w,V,E,\mathbf{k}}^M.
	$$
	Then there exist constants $c > 0$ and $M_0 \geq 1$
	depending on $K$, $s$, $(g_{w,z})_{w,z \in \L_{K,s}}$
	and $\alpha$
	such that
	$\P(F^M) \geq 1 - \exp(-cM)$
	for all $M \geq M_0$.
\end{lemma}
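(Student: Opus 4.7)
The plan is to reduce Lemma~\ref{thm:edge-ldp} to standard Chernoff concentration for binomial random variables, combined with a union bound over the tuples $(w,V,E,\mathbf{k})$, whose total number is polynomial in $M$.

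First I would fix $(w,V,E,\mathbf{k})$ and analyse the two counts appearing in the definition of $F^M_{w,V,E,\mathbf{k}}$. For each $j$, the event
\[
A_j := \bigl\{ U^{w,z}_{j,k_z} \leq g_{w,z} \text{ for all } z \in E \text{ and } U^{w,z}_{j,k_z} > g_{w,z} \text{ for all } z \in V \setminus E \bigr\}
\]
depends only on the uniforms $(U^{w,z}_{j,k_z})_{z \in V}$. Since the entire family of $U$'s is jointly independent and distinct values of $j$ use disjoint subfamilies, the events $(A_j)_j$ are i.i.d.\ Bernoulli with parameter exactly $g^E_{V,w}$. Hence the first and second counts in the definition of $F^M_{w,V,E,\mathbf{k}}$ are Binomial with parameters $(\lceil M(1+\alpha)\rceil, g^E_{V,w})$ and $(\lfloor M(1-\alpha)\rfloor, g^E_{V,w})$ respectively.

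Next I would apply a multiplicative Chernoff bound. Write $p := g^E_{V,w}$. If $p = 0$ both inequalities hold trivially, so assume $p > 0$. The upper count has mean essentially $M(1+\alpha)p$ and we wish to bound it above by $M(1+\alpha)^2 p$, a deviation by a multiplicative factor of approximately $1 + \alpha$ from the mean; the usual Chernoff bound $\P(\mathrm{Bin}(n,p) \geq (1+\delta)np) \leq e^{-\delta^2 np / 3}$ then gives failure probability at most $\exp(-c_\alpha Mp)$ for $M$ sufficiently large, with $c_\alpha > 0$ depending only on $\alpha$. The lower tail for the second count is handled identically. Crucially, the set $\{ g^E_{V,w} : w \in \L_{K,s},\, V \subseteq \L_{K,s} \setminus \{w\},\, E \subseteq V\}$ is finite, so its smallest positive element $p_0 > 0$ depends only on $K$, $s$ and $(g_{w,z})$; this yields a single-tuple failure bound of the form $\exp(-c_1 M)$ uniformly in $(w,V,E,\mathbf{k})$.

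Finally, I would union bound over $(w,V,E,\mathbf{k})$. The number of choices of $w$, $V$ and $E$ is bounded by a constant depending only on $K/s$, while the number of $\mathbf{k}$ is at most $\lceil M(1+\alpha) \rceil^{|\L_{K,s}|}$, which is polynomial in $M$. Consequently
\[
\P((F^M)^c) \leq C\, M^{|\L_{K,s}|} \exp(-c_1 M) \leq \exp(-cM)
\]
for a slightly smaller $c > 0$ and all $M \geq M_0$. The only mildly delicate points are handling the degenerate case $p=0$ (trivial) and absorbing the polynomial union-bound prefactor into a slightly worse exponential rate; I do not anticipate any deeper obstacle.
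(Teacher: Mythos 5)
Your proposal is correct and follows essentially the same route as the paper: identify each count as a binomial random variable (via the joint independence of the uniforms across $j$), apply an exponential concentration bound for binomials (you use a multiplicative Chernoff bound where the paper invokes Cram\'er's theorem, which is an immaterial difference), and absorb the polynomially-many choices of $(w,V,E,\mathbf{k})$ into a slightly smaller exponential rate via a union bound. Your explicit handling of the degenerate case $g^E_{V,w}=0$ and of the uniform positive lower bound $p_0$ is a welcome bit of extra care that the paper leaves implicit.
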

\begin{proof}
	Note that the displayed cardinalities are, respectively,
	a $\mathrm{Binomial}(\rup{M(1+\alpha)}, g^E_{V,w})$
	and a $\mathrm{Binomial}(\rdown{M(1-\alpha)}, g^E_{V,w})$ random variable.
	By applying Cram\'{e}r's theorem
	as in the proof of Lemma~\ref{thm:large-deviations},
	we find, for each $w,V,E$,
	that there are constants $c_{w,V,E} > 0$ and $M_{w,V,E} \geq 1$
	such that
	for any $\mathbf{k} \in [\rup{M(1+\alpha)}]^{|V|}$,
	$\P[(F_{w,V,E,\mathbf{k}}^M)^c] \leq e^{-c_{w,V,E}M}$
	for all $M \geq M_{w,V,E}$.
	The number of choices for $(w,V,E,\mathbf{k})$ is at most
	$|\L_{K,s}| \times 2^{|\L_{K,s}|} \times 2^{|\L_{K,s}|} \times (2M)^{|\L_{K,s}|}
	= |\L_{K,s}| (8M)^{|\L_{K,s}|}$,
	so by the union bound,
	$$
		\P\left(
		\bigcap_{w \in \L_{K,s}}
		\bigcap_{V \subseteq \L_{K,s} \setminus \{w\}}
		\bigcap_{E \subseteq V}
		\bigcap_{\mathbf{k} \in [\rup{M(1+\alpha)}]^{|V|}}
		F_{w,V,E,\mathbf{k}}^M
		\right)
		\geq 1 - |\L_{K,s}|(8M)^{|\L_{K,s}|}\exp(-M \times \min_{w,V,E} c_{w,V,E} )
	$$
	for all $M \geq \max_{w,V,E} M_{w,V,E}$,
	which implies our claimed bound.
\end{proof}

We will prove Proposition~\ref{thm:transitivity}
by conditioning on the structure of $\hat{G}_s(\omega_N \cup \{x\})$.

\begin{definition}
	\label{def:graph-structure}
	Given $K$ and $s$ as in Definition~\ref{def:grid},
	recall that for $x \in \Lambda_K$, $\hat{x}$ is the unique element of $\L_{K,s}$
	such that $x \in \Lambda_s(\hat{x})$.
	For any graph $G = (V(G),E(G))$ whose vertex set is a subset of $\Lambda_K$,
	let $F_s(G)$ be the graph with vertex set
	$\{ \hat{x} : x \in V(G) \}$
	and edge set
	$\{ \hat{x}\hat{y} : xy \in E(G) \}$.
	
	Let $x \in \Lambda_K$.
	Let $\mathcal{F}_x$ be the set containing every graph
	whose vertex set is a subset of $\L_{K,s}$
	containing $\hat{x}$.
	For $f \in \mathcal{F}_x$ let $S_{f,x}$ be the event that
	$|(\omega_N \cup \{x\}) \cap \Lambda_s(z)| \leq 1$ for all $z \in \L_{K,s}$
	and $F_s(\Ghat_s(\omega_N \cup \{x\})) = f$.
	
	Suppose $f$ has vertex set $V(f) = \{z_0, z_1, \dots, z_m\}$
	where $z_0 = \hat{x}$ and edge set $E(f)$.
	Let
	$$
	\hat{g}_s(f)
	:=
	\left(\prod_{kl \in E(f)} \hat{g}_s(z_k,z_l)\right)
	\left(\prod_{kl \in V(f)^{(2)} \setminus E(f)}(1 - \hat{g}_s(z_k,z_l))\right).
	$$
	
\end{definition}

\begin{proof}[Proof of Proposition~\ref{thm:transitivity}]
	By Lemma~\ref{thm:G-coupling}
	applied to the larger box
	$\Lambda_{2K}$,
	for sufficiently small $s$,
	\begin{align}
	\label{eq:mhat}
		\hat{m}_h^{2K,s}(\lambda)
		\leq
		m_h(\lambda) + \frac{\delta}{4}.
	\end{align}
	We can compute that the probability a $\mathrm{Poisson}(2\lambda s^d)$
	random variable is greater than 1 is $4 \lambda^2 s^{2d} + O(\lambda^3 s^{3d})$,
	so we can fix $s$ depending on $\delta$, $K$ and $\lambda$ such that
	\eqref{eq:mhat} holds and
	\begin{align}
	\label{eq:no-double-boxes}
		\left(\frac{K}{s}\right)^d \P\!\left[ \mathrm{Poisson}(2\lambda s^d) > 1 \right]
		\leq \frac{\delta}{4}.
	\end{align}
	Moreover assume that $K/s \in \N$.
	
	Fix $\alpha \in (0,1/2)$
	depending on $\delta$, $K$, $\lambda$ and $s$
	in a way we will specify later.

	Let $x \in \Lambda_K$ and $f \in \mathcal{F}_x$. Then
	\begin{align*}
		P_{N,h}(
			\text{$x$ is connected to a green vertex in $\Ghat_s(\omega_N \cup \{x\})$}
			\,|\,
			S_{f,x}
		)
		&=
		1 - \exp(-h |\cC_{\hat{x}}^f|),
	\end{align*}
	where $\cC^f_{\hat{x}}$ is the component of $\hat{x}$ in $f$.
	We will show that for any $f \in \mathcal{F}_x$,
	with high probability conditional on $\eta_N$
	and $(U^{w,z}_{i,j} : i,j \in \N, w,z \in \L_{K,s})$,
	$P_N(S_{f,x})$ is very close to the \emph{marginal} probability
	of an analogous event related to a Poisson process $\eta$ of intensity $\lambda$.
	
	Let $f \in \mathcal{F}_x$ and suppose the vertex set is
	$V(f) = \{z_0, z_1, \dots, z_{m}\}$
	where $z_0 = \hat{x}$.
	Define $\Omega^x \in \N_0^{\L_{K,s}}$
	by $\Omega^x(z) := |(\omega_N \cup \{x\}) \cap \Lambda_s(z)|$
	for all $z \in \L_{K,s}$.
	Let $\config^x_f \in \{0,1\}^{\L_{K,s}}$
	with $\config^x_f(z) = 1$ if and only if $z \in V(f)$.
	Let $\config_f(z) = \config^x_f(z)$ for $z \not= \hat{x}$
	and $\config_f(\hat{x}) = 0$.
	Then, conditional on the event $R_{s,\alpha,N}$ from Definition~\ref{def:R-event},
	\begin{align*}
		P_N(\Omega^x = \config^x_f)
		&=
		P_N(\Omega^x = \config^x_f \text{ and } \omega_N(\hat{x}) = 0)
		+
		P_N(\Omega^x = \config^x_f \text{ and } \omega_N(\hat{x}) = 1)\\
		&=
		P_N(\Omega = \config_f \text{ and } \omega_N(\hat{x}) = 0)
		+
		P_N(\Omega^x = \config^x_f \text{ and } \omega_N(\hat{x}) = 1)\\
		&\leq
		P_N(\Omega = \config_f)
		+
		P_N(\omega_N(\hat{x}) = 1)\\
		&\leq
		e^{\alpha \lambda K^d} (1+\alpha)^{(K/s)^d} p_{\lambda s^d}(\config_f)
		+ \frac{C+1}{N},
	\end{align*}
	where the final inequality follows from Lemma~\ref{thm:large-deviations}.
	
	Conditional on $\Omega^x = \config^x_f$,
	the vertex set of $F_s(\Ghat_s(\omega_N \cup \{x\}))$ is $V(f)$,
	and exactly one uniformly-chosen vertex in each of $\Lambda_s(z_1), \dots, \Lambda_s(z_m)$
	is open in $\omega_N$.
	We will now show that the probability that $f$
	and $F_s(\Ghat_s(\omega_N \cup \{x\}))$
	have the same edge set is close to $\hat{g}_s(f)$.
	Let the chosen vertex in $\Lambda_s(z_i)$ be $X_i$.
	Almost surely $X_0 = \hat{x}$.
	For $1 \leq k \leq m$,
	the conditional probability, given $(X_0, \dots, X_{k-1})$,
	that the edges between $\{X_0, \dots, X_{k-1}\}$
	and $X_k$ in $\Ghat_s(\omega_N \cup \{x\})$
	match the edges between $\{z_0, \dots, z_{k-1}\}$ and $z_k$ in $f$
	is equal to
	\begin{align}
	\label{eq:inductive-edges}
		\frac{1}{H_{N\lambda}(z_k)}
		\left| \left\{
			j \in [ H_{N\lambda}(z_k) ] :
			\parbox{9cm}{
				\centering
				for all $l \in \{0,\dots,k-1\}$,
				$U^{z_l,z_k}_{i_{X_l},j} \leq \hat{g}_s(z_l,z_k)$
				if $z_lz_k \in E(f)$,
				and
				$U^{z_l,z_k}_{i_{X_l},j} > \hat{g}_s(z_l,z_k)$
				if $z_lz_k \not\in E(f)$
			}
		\right\}\right|.
	\end{align}
	Then, conditional on $R_{s,\alpha,N}$,
	$H_{N\lambda}(z_k)$ is between $N\lambda s^d(1-\alpha)$ and $N\lambda s^d(1+\alpha)$,
	so we apply Lemma~\ref{thm:edge-ldp} with $M = \rdown{N\lambda s^d}$
	and $g_{w,z} = \hat{g}_s(w,z)$.
	On the event $R_{s,\alpha,N} \cap F^{\rdown{N\lambda s^d}}$,
	\eqref{eq:inductive-edges}
	is bounded by
	$$\left(\prod_{\substack{0 \leq l \leq k-1 \\ kl \in E(f)}}\hat{g}_s(z_l,z_k)\right)
	\left(\prod_{\substack{0 \leq l \leq k-1 \\ kl \not\in E(f)}}
		(1-\hat{g}_s(z_l,z_k))
	\right)
	\frac{(1+\alpha)^2}{1-\alpha}.$$
	Therefore, conditional on $R_{s,\alpha,N}$ and $F^{\rdown{N\lambda s^d}}$
	we almost surely have
	\begin{align*}
		P_N(F_s(\Ghat_s(\omega_N \cup \{x\})) = f \,|\, \Omega^x = \config^x_f)
		\leq
		\hat{g}_s(f)
		\left(\frac{(1+\alpha)^2}{1-\alpha}\right)^{m}.
	\end{align*}
	Since $m \leq (K/s)^d$, this gives us a uniform bound of the form
	\begin{align*}
		P_N(F_s(\Ghat_s(\omega_N \cup \{x\})) = f \,|\, \Omega^x = \config^x_f)
		&\leq
		\hat{g}_s(f)
		(1 + C'\alpha),
	\end{align*}
	where the constant $C'$ depends on $K$, $\delta$ and $s$,
	but \emph{not} on $f$
	and not on $\alpha$.
	Then,
	conditional on $R_{s,\alpha,N}$ and $F^{\rdown{N\lambda s^d}}$,
	\begin{align*}
		&P_{N,h}(\text{$x$ is connected to a green vertex in $\Ghat_s(\omega_N \cup \{x\})$})\\
		&\leq
		\sum_{f \in \mathcal{F}_x} P_{N}(S_{f,x}) (1 - e^{-h|\cC_{\hat{x}}^f|})
		+ \frac{\delta}{4}\\
		&=
		\sum_{f \in \mathcal{F}_x}
			P_N(F_s(\Ghat_s(\omega_N \cup \{x\})) = f \,|\, \Omega^x = \config^x_f)
			P_N(\Omega^x = \config^x_f)
			(1 - e^{-h|\cC_{\hat{x}}^f|})
		+ \frac{\delta}{4}\\
		&\leq
		e^{\alpha \lambda K^d}(1+\alpha)^{(K/s)^d}(1+C'\alpha)
		\sum_{f \in \mathcal{F}_x}
			\hat{g}_s(f)
			p_{\lambda s^d}(\config_f)
			(1 - e^{-h|\cC_{\hat{x}}^f|})\\
		&\phantom{e^{\alpha \lambda K^d}(1+\alpha)^{(K/s)^d}}
		+ \frac{(C+1)(1 + C'\alpha)|\mathcal{F}_x|}{N}
		+ \frac{\delta}{4}.
	\end{align*}
	Note that $|\mathcal{F}_x|$ is bounded by a constant depending on $K$ and $s$
	but not on $x$.
	Therefore there exists an $N_0$ depending on $K$, $\delta$, $s$ and $\alpha$
	such that for all $N \geq N_0$,
	$(C+1)(1+C'\alpha)|\mathcal{F}_x|/N \leq \delta/4$.
	We can and do choose $\alpha = \alpha(K,\delta,s) \in (0,1/2)$
	sufficiently small so that
	the above is almost surely bounded by
	$$
		\sum_{f \in \mathcal{F}_x}
			\hat{g}_s(f)
			p_{\lambda s^d}(\config_f)
			(1 - e^{-h|\cC_{\hat{x}}^f|})
		+ \frac{3}{4}\delta.
	$$
	
	Let $\eta$ be a Poisson point process of intensity $\lambda$ in $\Lambda_K$.
	Then by the Harris--FKG inequality,
	$\hat{m}_h^{2K,s}(\lambda)
	\geq \P_{\lambda,g,h}[
		\text{$x$ is connected to a green vertex in $\hat{G}_s(\eta \cup \{x\})$}
	]$.
	For $f \in \mathcal{F}_x$, let $\tilde{S}_{f,x}$ be the event that 
	$|(\eta \cup \{x\}) \cap \Lambda_s(z)| \leq 1$ for all $z \in \L_{K,s}$
	and $F_s(\Ghat_s(\eta \cup \{x\})) = f$.
	Then we have
	\begin{align*}
		\P_{\lambda,g,h}[
		\text{$x$ is connected to a green vertex in $\hat{G}_s(\eta \cup \{x\})$}
		]
		&\geq
		\sum_{f \in \mathcal{F}_x}\P_{\lambda,g}[\tilde{S}_{f,x}](1-e^{-h|\cC_{\hat{x}}^f|})\\
		&=
		\sum_{f \in \mathcal{F}_x}
			\hat{g}_s(f) p_{\lambda s^d}(\config_f) (1 - e^{-h|\cC_{\hat{x}}^f|}).
	\end{align*}
	Therefore, for all $N \geq N_0$,
	we have,
	conditional on $R_{s,\alpha,N}$ and $F^{\rdown{N\lambda s^d}}$,
	\begin{align*}
		P_{N,h}(\text{$x$ is connected to a green vertex in $\Ghat_s(\omega_N \cup \{x\})$})
		&\leq
		\hat{m}_h^{2K,s}(\lambda) + \frac{3}{4}\delta\\
		&\leq
		m_h(\lambda) + \delta
	\end{align*}
	for any $x \in \eta_N^\zerovec$,
	and so
	\begin{align*}
		&\P_{N\lambda,g}\!\left[
			P_{N,h}\!\left(
				\text{$x$ is connected to a green vertex in $\hat{G}_s(\omega_N \cup \{x\})$}
			\right)
			> m_h(\lambda) + \delta
		\right]\\
		&\phantom{}
		\leq
		\P_{N\lambda}[R_{s,\alpha,N}^c]
		+ \P_{N\lambda,g}[(F^{\rdown{N\lambda s^d}})^c].
	\end{align*}
	By Lemma~\ref{thm:large-deviations} and Lemma~\ref{thm:edge-ldp},
	this bound decays exponentially in $N$,
	uniformly in $x$.
\end{proof}

\section{Unbounded connection functions}
\label{sec:unbounded}

In Section~\ref{sec:bounded} we proved
Theorem~\ref{thm:exponential-decay} and Theorem~\ref{thm:decrease-lambda}
under the assumption that $g$ had bounded support.
In this section we assume only that $g$ satisfies \eqref{eq:g-finite},
and deduce that both theorems hold in this case
by applying them to a version of $g$ cut off at a large finite radius
and taking a limit.

\begin{definition}
	Let $g$ be a connection function satisfying \eqref{eq:g-finite}.
	For $R > 0$ let $g^R : \R^d \to [0,1]$ be the cut-off version of $g$
	defined by $g^R(x) = g(x)1(\|x\| \leq R)$.
\end{definition}

\begin{remark}
	There is a natural coupling of $G_g(\eta^\zerovec)$ and $G_{g^R}(\eta^\zerovec)$
	using $(U_{x,y})_{x,y \in \eta^\zerovec}$ from Definition~\ref{def:rcm}.
	Throughout this section we assume that the two models are thus coupled.
\end{remark}

\begin{definition}
	Given a point process $\eta$, let
	$\cC_0^{R}$ be the connected component of $\zerovec$
	in $G_{g^R}(\eta^{\zerovec})$.
	Let
	\begin{align*}
		\psi_n^R(\lambda) &:= \P_{\lambda,g}[|\cC_0^R| \geq n]
		\intertext{and}
		m_h^R(\lambda) &:= \P_{\lambda,g,h}[ \cC_0^R \cap \cG \not= \emptyset ].
	\end{align*}
\end{definition}

To derive Theorem~\ref{thm:decrease-lambda} with general connection function $g$
from the fact that it holds for every $g^R$,
we first show that if $|\cC_0^R|$ is small,
then with the coupling above there is a high probability that $\cC_0 = \cC_0^R$.

\begin{remark}
	We believe that the same method would allow us to derive the equivalent
	of Theorem~\ref{thm:exponential-decay} for long-range percolation on a lattice
	from the case of finite-range percolation.
\end{remark}

\begin{proposition}
	\label{thm:cutoff-k}
	Let $k \in \N$.
	Suppose $g$ satisfies \eqref{eq:g-finite}.
	Then
	$$
	\P_{\lambda,g}\!\left[
	|\cC_0^R| = k \text{ and } \cC_0 \not= \cC_0^R
	\right]
	\to 0
	$$
	as $R \to \infty$.
\end{proposition}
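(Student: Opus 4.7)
The plan is to avoid any delicate tail estimate on $g$ and argue purely by monotone convergence of events, using the natural coupling of $G_g(\eta^\zerovec)$ and $G_{g^R}(\eta^\zerovec)$ via the shared uniforms $(U_{x,y})$.

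Under this coupling $R \mapsto \cC_0^R$ is monotone non-decreasing in $R$: since $g^R \leq g^{R'}$ pointwise for $R \leq R'$, we have $G_{g^R}(\eta^\zerovec) \subseteq G_{g^{R'}}(\eta^\zerovec) \subseteq G_g(\eta^\zerovec)$, and hence $\cC_0^R \subseteq \cC_0^{R'} \subseteq \cC_0$ a.s. The key step is to show $\cC_0^R \uparrow \cC_0$ as $R \to \infty$: any $v \in \cC_0$ is joined to $\zerovec$ by some path in $G_g(\eta^\zerovec)$ with only finitely many edges, each of finite (random) length, so once $R$ exceeds the longest of these edges the whole path lies in $G_{g^R}(\eta^\zerovec)$ and $v \in \cC_0^R$. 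In particular $|\cC_0^R| \uparrow |\cC_0|$ in $\N \cup \{\infty\}$ almost surely.

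Given this, the proof is immediate. On $\{|\cC_0| = k\}$ the cluster $\cC_0$ is finite, so $\cC_0^R = \cC_0$ eventually; on $\{|\cC_0| \neq k\}$ we have $|\cC_0^R| \neq k$ eventually. Thus both indicators $1_{\{|\cC_0^R|=k\}}$ and $1_{\{|\cC_0^R|=k,\,\cC_0^R = \cC_0\}}$ converge almost surely to $1_{\{|\cC_0|=k\}}$, and bounded convergence gives
\[
	\P_{\lambda,g}\!\left[|\cC_0^R|=k,\, \cC_0 \neq \cC_0^R\right]
	= \P_{\lambda,g}\!\left[|\cC_0^R|=k\right] - \P_{\lambda,g}\!\left[|\cC_0^R|=k,\, \cC_0^R = \cC_0\right] \longrightarrow 0,
\]
where on the right-hand side I have used that when $|\cC_0^R| = k$ is finite and $\cC_0^R \subseteq \cC_0$, the equality $\cC_0^R = \cC_0$ is equivalent to $|\cC_0| = k$.

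I do not anticipate a real obstacle; the only care required is in justifying the almost-sure statement $\cC_0^R \uparrow \cC_0$, which follows from the path argument above together with the almost-sure finiteness of every edge length in $G_g(\eta^\zerovec)$. Notably the argument uses neither the radially-decreasing assumption on $g$ nor any rate of decay beyond what is already needed for $G_g(\eta^\zerovec)$ to be well-defined, so the method should generalise robustly.
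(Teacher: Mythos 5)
Your proof is correct, and it takes a genuinely different and more elementary route than the paper. You argue softly: under the simultaneous coupling via the shared uniforms, $R \mapsto \cC_0^R$ is non-decreasing with $\bigcup_R \cC_0^R = \cC_0$ (every vertex of $\cC_0$ is reached by a finite path whose edges all survive the cut-off once $R$ exceeds their maximal length), so the indicator of $\{|\cC_0^R| = k,\ \cC_0 \neq \cC_0^R\}$ is almost surely eventually zero and bounded convergence finishes the job. The paper instead computes: it rewrites the probability via the multivariate Mecke equation as $\frac{\lambda^{k-1}}{(k-1)!}\int \phi_1^R(\mathcal{X})\,\phi_2^R(\mathcal{X})\,\d x_1\cdots\d x_{k-1}$, factorises the integrand using the independence of $\eta$ restricted to $B(\mathcal{X},R)$ and its complement, dominates $\phi_1^R$ by the integrable function $g_2$ (Lemma~\ref{thm:g2-integrable}, proved via a Galton--Watson moment bound), and applies dominated convergence. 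Your argument entirely bypasses the Mecke computation and Lemma~\ref{thm:g2-integrable}, uses only \eqref{eq:g-finite} implicitly (and not the radial monotonicity of $g$), and also yields Corollary~\ref{thm:nprob-estimate} directly by monotone convergence of $\{|\cC_0^R|\geq n\}$ up to $\{|\cC_0|\geq n\}$. What the paper's harder route buys is quantitativity: since $\phi_2^R(\mathcal{X}) \leq k\lambda\int_{\|y\|>R}g(y)\,\d y$ uniformly, its proof gives an explicit rate in terms of the tail of $g$, whereas your soft argument gives convergence with no rate. As the paper only ever uses the qualitative statement, your proof suffices for everything downstream.
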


\begin{corollary}
	\label{thm:nprob-estimate}
	Suppose $g$ satisfies \eqref{eq:g-finite}.
	Let $\lambda > 0$ and $n \in \N$. Let $\eps > 0$.
	There exists an $R_0 = R_0(g,\lambda,n,\eps)$
	such that for all $R > R_0$,
	$$
	| \psi_n(\lambda) - \psi_n^R(\lambda) | < \eps.
	$$
\end{corollary}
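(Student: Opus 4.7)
The plan is to exploit the natural coupling between $G_g(\eta^\zerovec)$ and $G_{g^R}(\eta^\zerovec)$ via the shared uniforms $(U_{x,y})$. Since $g^R \leq g$ pointwise, every edge of $G_{g^R}$ is an edge of $G_g$, hence $\cC_0^R \subseteq \cC_0$ almost surely. In particular $\{|\cC_0^R| \geq n\} \subseteq \{|\cC_0| \geq n\}$, so $\psi_n^R(\lambda) \leq \psi_n(\lambda)$, and the quantity to control is just
\[
\psi_n(\lambda) - \psi_n^R(\lambda) \;=\; \P_{\lambda,g}\!\left[|\cC_0| \geq n \text{ and } |\cC_0^R| < n\right].
\]

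The strategy is then to decompose by the value of $|\cC_0^R|$. Writing
\[
\P_{\lambda,g}\!\left[|\cC_0| \geq n,\, |\cC_0^R| < n\right]
= \sum_{k=0}^{n-1} \P_{\lambda,g}\!\left[|\cC_0^R| = k,\, |\cC_0| \geq n\right],
\]
I observe that on the event $\{|\cC_0^R| = k,\, |\cC_0| \geq n\}$ with $k < n$, since $\cC_0^R \subseteq \cC_0$ we must have $\cC_0^R \neq \cC_0$. Therefore each summand is bounded above by $\P_{\lambda,g}\!\left[|\cC_0^R| = k,\, \cC_0 \neq \cC_0^R\right]$, which is exactly the quantity shown to vanish as $R \to \infty$ in Proposition~\ref{thm:cutoff-k}.

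To conclude, given $\eps > 0$, for each $k \in \{0,1,\dots,n-1\}$ apply Proposition~\ref{thm:cutoff-k} to produce some $R_k = R_k(g,\lambda,k,\eps)$ such that the bound $\P_{\lambda,g}[|\cC_0^R| = k,\, \cC_0 \neq \cC_0^R] < \eps/n$ holds for all $R > R_k$. Taking $R_0 := \max_{0 \leq k \leq n-1} R_k$ and summing the $n$ estimates gives $|\psi_n(\lambda) - \psi_n^R(\lambda)| < \eps$ for all $R > R_0$, as required. There is no real obstacle here: the work was done in Proposition~\ref{thm:cutoff-k}, and the corollary is just a finite union over $k < n$ combined with the monotonicity $\cC_0^R \subseteq \cC_0$.
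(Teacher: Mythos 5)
Your argument is correct and is exactly the intended derivation: the paper states the corollary without proof immediately after Proposition~\ref{thm:cutoff-k}, and the only content is the monotone coupling $\cC_0^R \subseteq \cC_0$ plus the finite decomposition over $k < n$ that you give. (The $k=0$ term is vacuous since $\zerovec \in \cC_0^R$, but that does not affect anything.)
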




The following function, defined in \cite[Definition 6.2]{meester-roy},
will be useful in proving Proposition~\ref{thm:cutoff-k}.

\begin{definition}
	\label{def:connected-prob}
	Given a connection function $g$,
	for $x_1, \dots, x_k \in \R^d$,
	let $g_2(x_1, \dots, x_k)$ be the probability that $G_g(\{x_1, \dots, x_k\})$
	is connected.
\end{definition}

For a later application of the dominated convergence theorem,
we will need the following integrability result.

\begin{lemma}
	\label{thm:g2-integrable}
	Let $g$ be a connection function satsifying $0 < \int_{\R^d} g(x)\d x < \infty$.
	Then, for any $k \geq 2$,
	$$
	\int_{(\R^d)^{k-1}} g_2(\zerovec,x_1,\dots,x_{k-1})\d x_1 \dots \d x_{k-1} < \infty.
	$$
\end{lemma}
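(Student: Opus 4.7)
The plan is to bound $g_2$ from above by a union bound over spanning trees of the complete graph on $k$ vertices, and then integrate each resulting product by peeling off leaves one at a time.

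First, I would observe that for any set of $k$ points to be connected in $G_g$, the realised graph must contain at least one spanning tree. Hence, by a union bound over the set $\mathcal{T}_k$ of spanning trees on the labelled vertex set $\{0,1,\dots,k-1\}$,
\[
g_2(\zerovec, x_1, \dots, x_{k-1})
\;\leq\;
\sum_{T \in \mathcal{T}_k} \prod_{\{i,j\} \in E(T)} g(x_i - x_j),
\]
where $x_0 := \zerovec$. By Cayley's formula, $|\mathcal{T}_k| = k^{k-2}$, so it suffices to show that for every fixed tree $T \in \mathcal{T}_k$,
\[
\int_{(\R^d)^{k-1}} \prod_{\{i,j\} \in E(T)} g(x_i - x_j) \, \d x_1 \cdots \d x_{k-1}
\;=\; I^{k-1},
\]
where $I := \int_{\R^d} g(x)\,\d x$, which is finite by \eqref{eq:g-finite}.

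To evaluate this integral, I would view $T$ as rooted at vertex $0$ (corresponding to $\zerovec$) and integrate the variables $x_i$ in reverse-BFS order, always integrating out a current leaf. Concretely, pick any leaf $\ell \neq 0$ of $T$ and let $p$ be its unique neighbour. Then $x_\ell$ appears in exactly one factor $g(x_\ell - x_p)$, and by translation,
\[
\int_{\R^d} g(x_\ell - x_p)\,\d x_\ell \;=\; \int_{\R^d} g(x)\,\d x \;=\; I.
\]
Removing $\ell$ from $T$ leaves a tree on $k-1$ vertices (still rooted at $0$), and we may iterate. After $k-1$ such integrations we obtain $I^{k-1}$, independently of the shape of $T$. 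Summing over $\mathcal{T}_k$ gives the bound $k^{k-2} I^{k-1} < \infty$.

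There is no real obstacle here once the union-bound reduction is in place; the only care needed is the observation that a leaf's coordinate appears in exactly one factor of the product, which is what makes the inductive integration work. Applying Tonelli's theorem throughout (all integrands are non-negative) justifies exchanging the sum with the integral and iterating the one-dimensional integrations.
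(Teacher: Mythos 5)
Your proof is correct, but it takes a genuinely different route from the paper. You bound $g_2$ directly by a union bound over the $k^{k-2}$ spanning trees of the complete graph on the $k$ points, using independence of the edge variables to factor the probability over tree edges, and then integrate each tree product by repeatedly peeling off a leaf $\ell \neq 0$ (such a leaf always exists since any tree on at least two vertices has at least two leaves, and its coordinate appears in exactly one factor), yielding the explicit bound $k^{k-2}\bigl(\int_{\R^d} g\bigr)^{k-1}$; Tonelli covers the interchange of sum and integral. The paper instead applies the Mecke equation to recognise the integral as the expected number of connected induced subgraphs of size $k$ containing $\zerovec$ in a unit-intensity RCM, bounds that count by $(\sum_{l=0}^{k-1} N_l)^{k-1}$ where $N_l$ is the number of vertices at graph distance $l$ from $\zerovec$, and then invokes the stochastic domination of $N_l$ by generations of a Galton--Watson process with Poisson offspring, whose moments are finite. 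Your argument is more elementary and self-contained (no Mecke equation, no branching-process comparison, no generating functions) and produces an explicit quantitative bound; the paper's argument recycles machinery already cited from Meester--Roy and fits the probabilistic framing of the surrounding section, but is less direct. Either proof suffices for the dominated-convergence application in Proposition~\ref{thm:cutoff-k}.
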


\begin{proof}
	Let $\eta$ be a homogeneous Poisson point process on $\R^d$ with intensity 1.
	By the Mecke equation,
	\begin{align*}
		&\int_{(\R^d)^{k-1}} g_2(\zerovec,x_1,\dots,x_{k-1})\d x_1 \dots \d x_{k-1}\\
		&=
		\E_{1,g}\left[
		\sum_{(x_1, \dots, x_{k-1}) \in \eta_{\not=}^{k-1}}
		1(\{\zerovec,x_1,\dots,x_{k-1}\} \text{ is connected in } G_g(\eta^{\zerovec}))
		\right],
	\end{align*}
	which is the expected number of connected induced subgraphs
	of $G_g(\eta^{\zerovec})$
	of size $k$ which contain $\zerovec$.
	For $l \geq 0$, let $N_{l}$ denote the number of vertices at graph distance $l$
	from $\zerovec$ in $G_g(\eta^{\zerovec})$.
	Then the above expectation is bounded by
	$\E_{1,g}[(\sum_{l=0}^{k-1} N_{l})^{k-1}]$.
	In \cite[the proof of Theorem 6.1]{meester-roy},
	it is shown that $N_{l}$ is stochastically dominated
	by the number of individuals $Z_l$ in the $l$th generation
	of a Galton-Watson branching process
	with $\mathrm{Poisson}(\int_{\R^d}g(x)\d x)$ offspring distribution.
	We can show using generating functions~\cite[Chapter 5.4]{grimmett-stirzaker}
	that $Z_l$ has a finite $(k-1)$th moment,
	therefore $\E_{1,g}[(\sum_{l=0}^{k-1} N_{l})^{k-1}] < \infty$,
	and the result follows.
\end{proof}


\begin{proof}[Proof of Proposition~\ref{thm:cutoff-k}]
	First, if $k=1$ then 
	\begin{align*}
		\P_{\lambda,g}[|\cC_0^R| = 1 \text{ and } \cC_0 \not= \cC_0^R]
		&\leq \P_{\lambda,g}[\zerovec \text{ has an edge longer than } R \text{ in } G_g(\eta^{\zerovec})]\\
		&\leq \lambda \int_{\R^d \setminus B(\zerovec,R)} g(x)\d x,
	\end{align*}
	where the second bound is from Markov's inequality.
	This tends to zero as $R \to \infty$ because $g$ satisfies \eqref{eq:g-finite}.
	
	Now assume $k \geq 2$.
	We can write
	\begin{align*}
		&\P_{\lambda,g}[ |\cC_0^R| = k \text{ and } \cC_0 \not= \cC_0^R ]\\
		&=
		\frac{1}{(k-1)!}\E_{\lambda,g}\left[
		\sum_{(x_1, \dots, x_{k-1}) \in \eta_{\not=}^{k-1}}
		1(\cC_0^R = \{\zerovec, x_1, \dots, x_{k-1}\}
		\text{ and } \cC_0 \not= \{\zerovec, x_1, \dots, x_{k-1}\})
		\right]\\
		&=
		\frac{1}{(k-1)!}\E_{\lambda}\left[
		\sum_{(x_1, \dots, x_{k-1}) \in \eta_{\not=}^{k-1}}
		\P_g(\cC_0^R = \{\zerovec, x_1, \dots, x_{k-1}\}
		\text{ and } \cC_0 \not= \{\zerovec, x_1, \dots, x_{k-1}\} \,|\, \eta)
		\right]
	\end{align*}
	For a discrete set of points $\cP \subset \R^d$
	and $(y_1, \dots, y_k) \in \cP^{k}_{\not=}$, let
	$$
	f_R(y_1, \dots, y_{k} ; \cP)
	:= \P\left( \parbox{7cm}{\centering$\{y_1, \dots, y_k\}$ is a connected component of $G_{g^R}(\cP)$ but not of $G_{g}(\cP)$} \right).
	$$
	Then we have
	$$
	\P_{\lambda,g}[ |\cC_0^R| = k \text{ and } \cC_0 \not= \cC_0^R ]
	=
	\frac{1}{(k-1)!}\E_{\lambda}\left[
	\sum_{(x_1, \dots, x_{k-1}) \in \eta_{\not=}^{k-1}}
	f_R(\zerovec,x_1, \dots, x_{k-1};\eta \cup \{\zerovec\})
	\right].
	$$
	By the multivariate Mecke equation \cite[Theorem 4.4]{last-penrose},
	the above is equal to
	\begin{align*}
		\frac{\lambda^{k-1}}{(k-1)!} \int_{(\R^d)^{k-1}}
		\E_{\lambda}[ f_R(\zerovec,x_1, \dots, x_{k-1};\eta \cup \{\zerovec, x_1, \dots, x_{k-1}\}) ]
		\d x_1 \dots \d x_{k-1}.
	\end{align*}
	Given distinct $x_1, \dots, x_{k-1} \in \R^d \setminus \{\zerovec\}$,
	let $\mathcal{X} = \{\zerovec, x_1, \dots, x_{k-1}\}$.
	We can write
	\begin{align*}
		&1\left( \parbox{6cm}{\centering
			$\mathcal{X}$ is a connected component of $G_{g^R}(\eta \cup \mathcal{X})$
			but not of $G_{g}(\eta \cup \mathcal{X})$
		}
		\right)\\
		&=
		1\left(
		\parbox{5cm}{\centering$\mathcal{X}$ is a connected component of $G_{g^R}(\eta \cup \mathcal{X})$}
		\right)
		\times
		1\left(
		\parbox{4.5cm}{\centering there is an edge between $\mathcal{X}$ and $\eta$
			of length $>R$}
		\right).
	\end{align*}
	Let $B(\mathcal{X},R) = \bigcup_{x \in \mathcal{X}} B(x,R)$.
	Then the first event depends only on edges between points in
	$\mathcal{X} \cup ( \eta \cap B(\mathcal{X},R) )$,
	while the second event depends only on edges from $\mathcal{X}$
	to $\eta \cap (\R^d \setminus B(\mathcal{X},R))$.
	Since $\eta \cap B(\mathcal{X},R)$ and $\eta \cap (\R^d \setminus B(\mathcal{X},R))$
	are independent,
	these two events are also independent.
	Therefore,
	\begin{align*}
		\E_{\lambda}[f_R(\mathcal{X};\eta \cup \mathcal{X}) ]
		&=
		\P_{\lambda,g}\!\left[\parbox{4.6cm}{\centering$\mathcal{X}$ is a connected component in $G_{g^R}(\eta \cup \mathcal{X})$}\right]
		\times
		\P_{\lambda,g}\!\left[\parbox{3.8cm}{\centering$\exists$ an edge from $\mathcal{X}$ to $\eta \cap (\R^d \setminus B(\mathcal{X},R))$}\right].
	\end{align*}
	Let the first probability be $\phi^R_1(\mathcal{X})$
	and let the second be $\phi^R_2(\mathcal{X})$.
	Since $\int_{\R^d} g(x)\d x < \infty$,
	$\phi^R_2(\mathcal{X}) \to 0$ as $R \to \infty$.
	
	Note that $\phi_1^R(\mathcal{X}) \leq \P[\mathcal{X} \text{ is connected in } G_{g^R}(\mathcal{X})] \leq \P[\mathcal{X} \text{ is connected in } G_{g}(\mathcal{X})]
	= g_2(\mathcal{X})$,
	where $g_2$ is as in Definition~\ref{def:connected-prob}.
	By Lemma~\ref{thm:g2-integrable},
	$(x_1, \dots, x_{k-1}) \mapsto g_2(\mathcal{X})$ is integrable.
	Therefore, by the dominated convergence theorem,
	\begin{align*}
		\lim_{R \to \infty} \P_{\lambda,g}[ |\cC_0^R| = k \text{ and } \cC_0 \not= \cC_0^R]
		&= \lim_{R \to \infty} \frac{\lambda^{k-1}}{(k-1)!} \int_{(\R^d)^{k-1}}
		\phi^R_1(\mathcal{X})\phi^R_2(\mathcal{X})\d x_1 \dots \d x_{k-1}\\
		&= \frac{\lambda^{k-1}}{(k-1)!} \int_{(\R^d)^{k-1}}
		\lim_{R \to \infty} (\phi^R_1(\mathcal{X})\phi^R_2(\mathcal{X}))
		\d x_1 \dots \d x_{k-1}\\
		&= 0,
	\end{align*}
	as claimed.
\end{proof}


\begin{proof}[Proof of Theorem~\ref{thm:decrease-lambda} for general $g$]
	Let $\eps > 0$.
	By Corollary~\ref{thm:nprob-estimate} applied to $\lambda$ and $\lambda'$,
	there exists an $R_0$ depending on $\lambda$, $h$, $n$ and $\eps$ such that
	$\psi_n(\lambda') \leq \psi_n^R(\lambda') + \eps$
	and $\psi_n^R(\lambda) \leq \psi_n(\lambda) + \eps$
	for all $R > R_0$.
	
	As we proved in Section~\ref{sec:bounded}
	that Theorem~\ref{thm:decrease-lambda} holds for connection functions
	with compact support such as $g^R$,
	we have
	$$
		\psi_n^R(\lambda_R') \leq \frac{1}{1 - m_h^R(\lambda)} \psi_n^R(\lambda).
	$$
	
	Note that $m_h^R(\lambda) \leq m_h(\lambda)$,
	and so $\lambda_R' \geq \lambda'$.
	Since $t \mapsto \psi_n^R(t)$ is increasing,
	we have $\psi_n^R(\lambda') \leq \psi_n^R(\lambda_R')$.
	Combining all the above inequalities,
	for any $R > R_0$,
	\begin{align*}
		\psi_n(\lambda') \leq \psi_n^R(\lambda') + \eps
		&\leq \psi_n^R(\lambda_R') + \eps\\
		&\leq \frac{1}{1 - m_h^R(\lambda)} \psi_n^R(\lambda) e^{-hn} + \eps\\
		&\leq \frac{1}{1 - m_h(\lambda)} (\psi_n(\lambda) + \eps) e^{-hn} + \eps.
	\end{align*}
	Since $\eps$ was arbitrary, this implies
	$\psi_n(\lambda') \leq \frac{1}{1 - m_h(\lambda)} \psi_n(\lambda) e^{-hn}$
	as claimed.
\end{proof}

\subsection*{Acknowledgements}

Thanks to Matt Roberts for suggesting this problem and for many helpful discussions. Thanks also to Luca Zanetti for comments on a draft.

%




\bibliographystyle{plainurl}
\bibliography{sharpness-references}

\begin{thebibliography}{10}

\bibitem{noise-sensitivity}
Daniel Ahlberg, Erik Broman, Simon Griffiths, and Robert Morris.
\newblock Noise sensitivity in continuum percolation.
\newblock {\em Israel J. Math.}, 201(2):847--899, 2014.
\newblock \href {https://doi.org/10.1007/s11856-014-1038-y}
  {\path{doi:10.1007/s11856-014-1038-y}}.

\bibitem{large-deviations}
Frank den Hollander.
\newblock {\em Large deviations}, volume~14 of {\em Fields Institute
  Monographs}.
\newblock American Mathematical Society, Providence, RI, 2000.

\bibitem{1d-marked}
Peter Gracar, Lukas L{\"u}chtrath, and Christian M{\"o}nch.
\newblock The emergence of a giant component in one-dimensional
  inhomogeneous networks with long-range effects.
\newblock In Megan Dewar, Pawe{\l} Pra{\l}at, Przemys{\l}aw Szufel,
  Fran{\c{c}}ois Th{\'e}berge, and Ma{\l}gorzata Wrzosek, editors, {\em
  Algorithms and Models for the Web Graph}, pages 19--35, Cham, 2023. Springer
  Nature Switzerland.
\newblock \href {https://doi.org/10.1007/978-3-031-32296-9_2}
  {\path{doi:10.1007/978-3-031-32296-9_2}}.

\bibitem{grimmett-stirzaker}
Geoffrey~R. Grimmett and David~R. Stirzaker.
\newblock {\em Probability and random processes}.
\newblock Oxford University Press, Oxford, fourth edition, 2020.

\bibitem{rcm-lace-expansion}
Markus Heydenreich, Remco van~der Hofstad, Günter Last, and Kilian Matzke.
\newblock Lace expansion and mean-field behavior for the random connection
  model.
\newblock {\em arXiv preprint}, 2019.
\newblock \href {https://doi.org/10.48550/ARXIV.1908.11356}
  {\path{doi:10.48550/ARXIV.1908.11356}}.

\bibitem{power-law-critical}
Tom Hutchcroft.
\newblock Power-law bounds for critical long-range percolation below the
  upper-critical dimension.
\newblock {\em Probab. Theory Related Fields}, 181(1-3):533--570, 2021.
\newblock \href {https://doi.org/10.1007/s00440-021-01043-7}
  {\path{doi:10.1007/s00440-021-01043-7}}.

\bibitem{sharpness-bounded-support}
Niclas K\"{u}pper and Mathew~D. Penrose.
\newblock Largest component and sharpness in subcritical continuum percolation.
\newblock {\em arXiv preprint}, 2024.
\newblock \href {https://arxiv.org/abs/2407.10715} {\path{arXiv:2407.10715}}.

\bibitem{last-penrose}
G\"unter Last and Mathew Penrose.
\newblock {\em Lectures on the {P}oisson process}, volume~7 of {\em Institute
  of Mathematical Statistics Textbooks}.
\newblock Cambridge University Press, Cambridge, 2018.

\bibitem{le-cam}
Lucien Le~Cam.
\newblock An approximation theorem for the {Poisson} binomial distribution.
\newblock {\em Pacific J. Math.}, 10:1181--1197, 1960.
\newblock URL: \url{http://projecteuclid.org/euclid.pjm/1103038058}.

\bibitem{meester-roy}
Ronald Meester and Rahul Roy.
\newblock {\em {Continuum Percolation}}.
\newblock Cambridge Tracts in Mathematics. Cambridge University Press, 1996.
\newblock \href {https://doi.org/10.1017/CBO9780511895357}
  {\path{doi:10.1017/CBO9780511895357}}.

\bibitem{random-walk-on-rcm}
Ercan S\"{o}nmez and Arnaud Rousselle.
\newblock Random walk on the random connection model.
\newblock {\em Indagationes Mathematicae}, 33(5):1049--1060, 2022.
\newblock \href {https://doi.org/10.1016/j.indag.2022.05.002}
  {\path{doi:10.1016/j.indag.2022.05.002}}.

\bibitem{vanneuville}
Hugo Vanneuville.
\newblock Exponential decay of the volume for {Bernoulli} percolation: a proof
  via stochastic comparison.
\newblock {\em arXiv preprint}, 2024.
\newblock \href {https://arxiv.org/abs/2304.12110} {\path{arXiv:2304.12110}}.

\end{thebibliography}

\end{document}